\documentclass[11pt,reqno]{amsart}

\usepackage[text={160mm,240mm},centering]{geometry}            
\usepackage{amssymb,amsmath,setspace,geometry,indentfirst,changepage,inputenc,amsthm}
\usepackage{mathrsfs,amsfonts,savesym,graphicx,bm,color}

\usepackage{graphicx}
\usepackage{amssymb}
\usepackage{dsfont}

\usepackage{lscape}

\usepackage{tikz}
\usetikzlibrary{positioning}

\usepackage{float}

\usepackage[all,2cell]{xy}    
\UseAllTwocells               

\usepackage{amsmath,amsthm}

\usepackage[mathscr]{eucal}
\usepackage[all]{xy}
\usepackage{mathrsfs}
\usepackage{hyperref}
\usepackage{color}

\newtheorem{theorem}{Theorem}[section]
\newtheorem{lemma}[theorem]{Lemma}

\newtheorem{conjecture}[theorem]{Conjecture}
\newtheorem{corollary}[theorem]{Corollary}
\newtheorem{proposition}[theorem]{Proposition}

\newtheorem{definition-lemma}[theorem]{Definition-Lemma}
\newtheorem{definition-theorem}[theorem]{Definition-Theorem}

\newtheorem*{maintheoremA}{Theorem A}
\newtheorem*{maintheoremB}{Theorem B}
\newtheorem*{maintheoremC}{Theorem C}

\theoremstyle{definition}

\newtheorem{definition}[theorem]{Definition}

\newtheorem{remark}[theorem]{Remark}

\usepackage{fancyhdr}
\allowdisplaybreaks

\title[Analytic Criteria for Detecting Bernstein-Sato Roots]
{Analytic Criteria for Detecting Bernstein-Sato Roots}

\author{Yifan Chen}
\address{
	Department of Mathematical Sciences,
	Tsinghua University,
	Beijing, 100084, P. R. China.}
\email{c-yf20@tsinghua.org.cn}

\author{Huaiqing Zuo}
\address{Department of Mathematical Sciences,
	Tsinghua University,
	Beijing, 100084, P. R. China.}
\email{hqzuo@mail.tsinghua.edu.cn}

\begin{document}
	
	\maketitle
	
	%

	\begin{abstract}
		We study roots of Bernstein-Sato polynomials of ideals. Our first main result gives a general partial differential equation criterion for detecting roots of the Bernstein-Sato polynomial of an ideal. For homogeneous case, we recast this criterion as an analytic condition, connecting the algebraic problem of locating roots with analytic properties of the defining data. As an application, we give a series of roots for the general determinantal ideal prove the strong monodromy conjecture for them.

		Keywords. Bernstein-Sato polynomial,  determinantal variety, monodromy conjecture.
		
		MSC(2020). 14F10, 14M12, 13C40, 16S32.
	\end{abstract}

	\tableofcontents

\section{Introduction}

The Bernstein-Sato polynomial of a nonzero regular function $f$ is the
monic polynomial of least degree for which there is a differential
operator $P(s)$ satisfying
\[
P(s)f^{s+1}=b_f(s)f^s.
\]
Although defined by a differential equation, $b_f(s)$ is a subtle
invariant of the singularities of $f$. Its roots are negative rational
numbers; their exponentials describe eigenvalues of local Milnor
monodromy, and the largest root determines the log canonical threshold.
Thus the polynomial connects the theory of $\mathcal D$-modules with
the topology of Milnor fibres and birational invariants
\cite{Bernstein_original,Rationality_of_Roots_of_B-Function_Kashiwara,
	Kashiwara_Bernstein_and_eigenvalue_of_monodromy,
	Malgrange_Bernstein_eigenvalue}.
Budur, Mustaţă, and Saito extended this invariant to coherent ideals
and proved that it is independent of the chosen generators
\cite{Bernstein_Sato_polynomial_of_arbitrary_varieties}.

Our first main result gives a polynomial PDE test for an arbitrary root of the Bernstein-Sato polynomial of an ideal.

\begin{maintheoremA}[Theorem \ref{pde_thm}]
	Let $I=(f_1,\ldots,f_r)\subset\mathbb C[x_1,\ldots,x_n]$ be a nonzero
	ideal generated by nonzero polynomials with $f_j(0)=0$. Write
	$f^\beta=\prod_{j=1}^r f_j^{\beta_j}$. Suppose that for some
	$\rho\in\mathbb{Q}_{>0}$ there are polynomials
	$P_\alpha\in\mathbb C[x_1,\ldots,x_n]$, indexed by
	$\alpha\in\mathbb N^r$, such that $P_0\ne0$ and, for every
	$\alpha\in\mathbb N^r$ and $1\le i\le n$,
	\[
	x_iP_\alpha
	=
	\sum_{j=1}^r
	\left(\frac{\partial f_j}{\partial x_i}\right)(\partial)
	P_{\alpha+e_j},
	\qquad
	\sum_{j=1}^r f_j(\partial)P_{\alpha+e_j}
	=(\rho+|\alpha|)P_\alpha,
	\]
	and
	\[
	f^\beta(\partial)P_\alpha=0
	\quad\text{for every }\beta\in\mathbb N^r
	\text{ with }|\beta|=|\alpha|+1.
	\]
	Then $b_I(-\rho)=0$.
\end{maintheoremA}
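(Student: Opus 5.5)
The plan is to argue by duality, in the Fourier-transformed picture in which polynomials $P$ are paired with polynomials $g$ by
\[
\langle g,P\rangle:=\bigl(g(\partial)P\bigr)(0).
\]
Under this pairing, multiplication by $x_i$ on one side is adjoint to $\partial_i$ on the other, so $P(\partial)\delta_0$ is the Fourier picture of $P$. I use the Budur--Mustaţă--Saito description of $b_I$. Namely, $b_I(s)$ is the minimal monic polynomial such that, with $s=s_1+\dots+s_r$,
\[
b_I(s)\,f^{\mathbf s}\;\in\;\sum_{c\in\mathbb Z^r,\ |c|=1} D[s_1,\dots,s_r]\,\prod_{c_j<0}\binom{s_j}{-c_j}\,f^{\mathbf s+c}.
\]
Fix such an identity with $b=b_I$. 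The aim is to build a linear functional $\Phi$ on the $D[\mathbf s]$-module generated by the $f^{\mathbf s+c}$ such that three things hold. First, $\Phi$ kills every term on the right-hand side. Second, $s$ acts through $\Phi$ on $f^{\mathbf s}$ as the scalar $-\rho$. Third, $\Phi(f^{\mathbf s})\neq 0$. Applying $\Phi$ then gives $b(-\rho)\Phi(f^{\mathbf s})=0$, hence $b_I(-\rho)=0$.

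Concretely, I would define $\Phi$ on elements $g(x)\,f^{\mathbf s-\alpha}$ with $\alpha\in\mathbb N^r$ by pairing against $P_\alpha$, roughly
\[
\Phi\bigl(g\,f^{\mathbf s-\alpha}\bigr)=\langle g,P_\alpha\rangle\times(\text{a normalising factor in }\alpha).
\]
Elements carrying the shift $f^{\mathbf s+c}$ with some $c_j>0$ are treated by rewriting $f_j^{c_j}$ as a multiplication operator; the third hypothesis $f^\beta(\partial)P_\alpha=0$ for $|\beta|=|\alpha|+1$ is exactly what forces such terms to pair to zero. The action of each $s_j$ must then be specified consistently. The second hypothesis,
\[
\sum_j f_j(\partial)P_{\alpha+e_j}=(\rho+|\alpha|)P_\alpha,
\]
says that the total Euler-type operator $s$ acts on the level-$\alpha$ component by $-(\rho+|\alpha|)$. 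This is consistent with $f^{\mathbf s-\alpha}$ having total exponent shifted by $-|\alpha|$, and it is also where the binomial factors $\prod\binom{s_j}{-c_j}$ are absorbed.

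The first hypothesis,
\[
x_iP_\alpha=\sum_j(\partial_{x_i}f_j)(\partial)P_{\alpha+e_j},
\]
is designed to make $\Phi$ compatible with the relation
\[
\partial_i\bigl(g f^{\mathbf s-\alpha}\bigr)=(\partial_ig)f^{\mathbf s-\alpha}+\sum_j(s_j-\alpha_j)\,g\,(\partial_if_j)\,f^{\mathbf s-\alpha-e_j}.
\]
Indeed, pairing against $x_iP_\alpha$ on the left corresponds to $\partial_i$, and on the right one gets the level-$(\alpha+e_j)$ terms. This makes $\Phi$ vanish on $\partial_i(\cdots)$ up to the prescribed twist, so $\Phi$ descends to the module, or equivalently behaves like a $D$-module morphism to a $\delta$-type module. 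Finally, $\Phi(f^{\mathbf s})=\langle 1,P_0\rangle$ up to normalisation. Since this might vanish, I would instead evaluate against a suitable $g$ with $\langle g,P_0\rangle\neq0$, which exists because $P_0\neq0$. By $D$-linearity, replacing $f^{\mathbf s}$ by $g f^{\mathbf s}$ in the functional equation is harmless.

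The main obstacle is making this bookkeeping rigorous: choosing the normalising constants so that the $\partial_i$-relation and the $s_j$-action close up simultaneously across all levels $\alpha$. The difficulty is that the hypotheses only control the total $s=\sum s_j$, not the individual $s_j$. My first attempt would be to specialise the individual $s_j$ through the binomial coefficients, which vanish appropriately at level $\alpha$. I would then verify that every generator $\prod_{c_j<0}\binom{s_j}{-c_j}f^{\mathbf s+c}$ with $|c|=1$ is annihilated, using the third hypothesis for the positive part of $c$.
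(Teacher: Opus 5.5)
Your global strategy is the paper's: a linear functional built from the Fischer pairing with the $P_\alpha$ that vanishes on $\partial_i$-images (first identity), on which $s$ acts as $-\rho$ (second identity), and which vanishes on the right-hand side of the $b$-function identity (third condition). It is then evaluated on $h_0f^{\mathbf s}$ with $\langle P_0,h_0\rangle\neq0$. But the step you postpone as ``bookkeeping'' is the actual content of the proof, and the proposal does not supply it; this is a genuine gap.

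A definition $\Phi(g\,f^{\mathbf s-\alpha})=c_\alpha\langle g,P_\alpha\rangle$ with a \emph{scalar} $c_\alpha$ does not determine $\Phi$ on the elements you must evaluate. The coefficients in $D[\mathbf s]$ are $\mathbf s$-dependent. Your own formula for $\partial_i(gf^{\mathbf s-\alpha})$ produces factors $s_j-\alpha_j$, which the identity $x_iP_\alpha=\sum_j(\partial_if_j)(\partial)P_{\alpha+e_j}$ does not contain. Conversely, bare elements such as $gf^{\mathbf s-\alpha}$, or $f^{\mathbf s+c}$ without its binomial factor, generally lie outside the module on which the hypotheses give any control. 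Your fallback of ``specializing'' the individual $s_j$ does not point anywhere. Once $\Phi$ is defined on a $\mathbb C[x]$-basis of a $D[\mathbf s]$-stable module, the action of each $s_j$ is forced, and only the total $s$ has to act through $\Phi$ by a scalar.

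The missing idea is that the normalizing factor at level $\alpha$ is the falling factorial $\prod_j(s_j)_{\alpha_j}$, not a scalar. Put $[\alpha]:=\prod_j(s_j)_{\alpha_j}f^{\mathbf s-\alpha}=(-1)^{|\alpha|}\partial_t^\alpha f^{\mathbf s}$. By Remark \ref{uniqueness} these form a $\mathbb C[x]$-basis of the graph module $\mathcal D_{x,t}f^{\mathbf s}$. This module is $D[\mathbf s]$-stable and contains $gf^{\mathbf s}$. It also contains every generator, since $\prod_{c_j<0}\binom{s_j}{-c_j}f^{\mathbf s+c}=\frac{1}{c^-!}f^{c^+}[c^-]$ for $c=c^+-c^-$ with $|c^+|=|c^-|+1$.

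Define $\Phi(h[\alpha]):=(-1)^{|\alpha|}\langle P_\alpha,h\rangle$; this is the paper's $\Lambda(h\,\partial_t^\alpha f^{\mathbf s})=\langle P_\alpha,h\rangle$. Because $(s_k-\alpha_k)(s_k)_{\alpha_k}=(s_k)_{\alpha_k+1}$, one has
\[
\partial_i(h[\alpha])=(\partial_ih)[\alpha]+\sum_k h\,(\partial_if_k)\,[\alpha+e_k],
\qquad
s_k[\alpha]=f_k[\alpha+e_k]+\alpha_k[\alpha].
\]
The first formula together with the first identity gives $\Phi(\partial_im)=0$. Summing the second over $k$ and using the second identity gives $\Phi(sm)=-\rho\,\Phi(m)$ for all $m$.

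For the right-hand side, write each operator in $D[\mathbf s]$ with all $\partial_x$ moved to the left. Terms containing some $\partial_x$ are killed by $\Phi(\partial_im)=0$. The remaining operators lie in $\mathbb C[x,\mathbf s]$, which by the formula for $s_k[\alpha]$ preserves $\sum_{|\beta|=|\alpha|+1}\mathbb C[x]\,f^\beta[\alpha]$. On that space $\Phi$ vanishes, because $\langle P_\alpha,gf^\beta\rangle=\langle g,f^\beta(\partial)P_\alpha\rangle=0$ by the third condition. The paper phrases this last step via $s_j=-\partial_{t_j}t_j$: normal ordering of monomials $\partial_t^\theta t^\alpha$ preserves $|\alpha|-|\theta|$.
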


In the notations above, if all $f_{i}$ are homogeneous with the same degree $d$ and $\rho$ is the distinguished root $\frac{n}{d}$, one can get the polynomial PDE in \cite[Theorem 3.9]{CZ26}. Moreover, in the homogeneous case there is a convenience judgment for the roots of the Bernstein-Sato polynomial deduced from Theorem A using conditions in analysis. This is our Theorem B.

\begin{maintheoremB}[Theorem \ref{rank_thm}]
	Let $I=(f_1,\ldots,f_r)\subset\mathbb C[x_1,\ldots,x_n]$ be generated
	by nonzero homogeneous polynomials of degree $d\ge1$, and set
	$F=(f_1,\ldots,f_r):\mathbb C^n\to\mathbb C^r$.
	If $\operatorname{rank}dF(a)=n$ for some $a\in\mathbb C^n$, then
	\[
	b_I\left(-\frac{n+q}{d}\right)=0
	\qquad\text{for every integer }0\le q<d.
	\]
\end{maintheoremB}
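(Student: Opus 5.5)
The plan is to apply Theorem \ref{pde_thm} (Theorem A) with $\rho=\frac{n+q}{d}$. I will build the family $P_\alpha$ as "Fourier duals" of local residue-type functionals concentrated at a point where $F$ is an immersion. Homogeneity should make two of the three conditions of Theorem A automatic, so only the first equation needs real work.

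\emph{Step 1: reduction by homogeneity.} Look for $P_\alpha$ homogeneous of degree $q+d|\alpha|$. The third condition then holds trivially, since $f^\beta(\partial)P_\alpha$ would be homogeneous of degree $q+d|\alpha|-d(|\alpha|+1)<0$. For the second condition, apply $\sum_i\partial_i$ to the first equation:
\[
\sum_i \partial_i(x_iP_\alpha)=\sum_j\Bigl(\sum_i\partial_i\Bigl(\tfrac{\partial f_j}{\partial x_i}\Bigr)(\partial)\Bigr)P_{\alpha+e_j}.
\]
By Euler's identity the left side is $(n+q+d|\alpha|)P_\alpha$ and the right side is $d\sum_j f_j(\partial)P_{\alpha+e_j}$. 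Dividing by $d$ gives exactly $\sum_j f_j(\partial)P_{\alpha+e_j}=(\rho+|\alpha|)P_\alpha$. So it suffices to find homogeneous $P_\alpha$ of degree $q+d|\alpha|$ with $P_0\neq0$ satisfying $x_iP_\alpha=\sum_j(\partial_if_j)(\partial)P_{\alpha+e_j}$.

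\emph{Step 2: dual construction.} Since $\operatorname{rank}dF(a)=n$, some $n$ of the generators, say $G=(f_1,\dots,f_n)$ after renumbering, have $\det dG(a)\ne0$. Set $b=G(a)$. By the inverse function theorem there is a local inverse $G^{-1}$ near $b$ with $G^{-1}(b)=a$. For a holomorphic $g$ near $a$ and $\alpha\in\mathbb N^r$, define
\[
T_\alpha(g)=(-1)^{|\alpha|}\,\partial_t^{\alpha}\Bigl[\frac{g(G^{-1}(t))}{\det dG(G^{-1}(t))}\Bigr]_{t=b}
\]
if $\alpha$ is supported on $\{1,\dots,n\}$, and $T_\alpha=0$ otherwise. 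Formally $T_\alpha(g)=\int g(y)\,(\partial_t^\alpha\delta)(G(y)-t)|_{t=b}\,dy$. Differentiating $\delta(G(y)-t)$ in $y_i$ and integrating by parts gives
\[
T_\alpha(\partial_ig)=\sum_{j}T_{\alpha+e_j}\bigl((\partial_if_j)\,g\bigr).
\]
I would verify this rigorously by the change of variables $y=G^{-1}(t)$ together with the chain rule; the sum effectively runs over $j\le n$.

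\emph{Step 3: define $P_\alpha$ and conclude.} Let $P_\alpha(x)$ be the homogeneous component of degree $q+d|\alpha|$ of $T_\alpha(e^{\langle x,y\rangle})$, which is an entire power series in $x$. From $x_ie^{\langle x,y\rangle}=\partial_{y_i}e^{\langle x,y\rangle}$ and $(\partial_if_j)(\partial_x)e^{\langle x,y\rangle}=(\partial_if_j)(y)e^{\langle x,y\rangle}$, the identity of Step 2 yields $x_iT_\alpha(e^{xy})=\sum_j(\partial_if_j)(\partial_x)T_{\alpha+e_j}(e^{xy})$. Both operators are homogeneous in $x$: $x_i$ raises degree by $1$, and $(\partial_if_j)(\partial_x)$ lowers it by $d-1$. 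So comparing components of degree $q+d|\alpha|+1$ gives the required equation for the $P_\alpha$. Finally, $P_0(x)=\frac{\langle x,a\rangle^q}{q!\,\det dG(a)}$. This is nonzero because $a\ne0$: if $d\ge2$ then $dF(0)=0$, and if $d=1$ then $q=0$ and $P_0$ is a nonzero constant. Theorem A then gives $b_I(-\tfrac{n+q}{d})=0$.

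The main obstacle is Step 2: making the integration-by-parts identity rigorous for the local inverse-function functional. One has to check carefully the sign conventions and the Jacobian factor $1/\det dG$, so that the transported operator is exactly $\sum_j(\partial_if_j)\,\partial_{t_j}$ with no extra terms. I would first verify it for $|\alpha|=0$ via $\partial_{y_i}=\sum_j(\partial_if_j)\partial_{t_j}$ and the divergence form of the Jacobian, and then obtain general $\alpha$ by differentiating in $t$.
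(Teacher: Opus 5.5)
Your proposal follows the paper's proof. It uses the same choice of $n$ generators with invertible Jacobian and the same zero extension in the remaining indices. The functionals $\partial_t^\alpha[(g\circ G^{-1})/(\det dG\circ G^{-1})]_{t=b}$ come from the same divergence (change-of-variables) identity, and they are truncated to degree $q+d|\alpha|$ in the same way. Your generating function $T_\alpha(e^{\langle x,y\rangle})$ is just the Fischer-pairing dual $\sum_\beta \lambda_\alpha(x^\beta)x^\beta/\beta!$ used there, and your Step~1 is the paper's Corollary~\ref{homogeneous_pde}.

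One correction is needed. The factor $(-1)^{|\alpha|}$ in your definition of $T_\alpha$ makes the Step~2 identity false as stated: with it one gets $T_\alpha(\partial_i g)=-\sum_j T_{\alpha+e_j}((\partial_if_j)g)$. For $n=1$, for instance, $T_0(g')=g'(a)/f'(a)$, while $T_1(f'g)=-\frac{d}{dt}g(f^{-1}(t))|_{t=b}=-g'(a)/f'(a)$. The slip is that you dropped the minus sign from integrating by parts against $(\partial^\alpha\delta)(G(y)-t)$. Left uncorrected, it would put $-(\rho+|\alpha|)$ in condition (3).

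To fix it, omit the factor. What remains is exactly the paper's $L_\alpha$, for which $\frac{(\partial_ih)\circ\phi}{D\circ\phi}=\sum_j\partial_{v_j}\frac{((\partial_if_j)h)\circ\phi}{D\circ\phi}$ gives the recursion with a plus sign. Equivalently, keep your $T_\alpha$ and replace $P_\alpha$ by $(-1)^{|\alpha|}P_\alpha$. With either change the argument is complete.
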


The monodromy conjecture is one of the most important and influential problems in singularity theory of algebraic varieties. It connects many different fields including topology, analysis, geometry and number theory. It predicts that exponentials of the poles of motivic zeta functions are eigenvalues of local monodromy. Its strong form asserts that every pole of the motivic zeta function is a root of the Bernstein-Sato polynomial. For ideals, Verdier monodromy provides the relevant monodromy action, and the
Bernstein-Sato polynomial of the ideal records its eigenvalues after
exponentiation \cite{Motivic_Igusa_Zeta_Function,
	Verdier_new_monodromy,Veys_generalized_monodromy_zeta_function,
	Bernstein_Sato_polynomial_of_arbitrary_varieties}. 
	
As an application of Theorem A and Theorem B, we give a series of roots for the Bernstein-Sato polynomial of determinantal ideals and prove the strong monodromy conjecture for determinantal varieties. Note that the (weak) monodromy conjecture for determinantal varieties was proved in \cite{CZ25} and the strong monodromy conjecture was solved for maximal minors in \cite{Bernstein_for_maximal_minors}. Moreover, the authors of \cite{Bernstein_for_maximal_minors} gave the Bernstein-Sato polynomial of the maximal minors of a generic $m \times n(m \le n)$ matrix as $\prod_{i=n-m+1}^{n}(s+i)$.

\begin{maintheoremC}[Theorem \ref{SMC_for_det}]
	Let $I_r$ be the ideal of $r\times r$ minors of a generic $m\times n$
	matrix, where $m\le n$. If $r<m$, then
	\[
	b_{I_r}\left(-\frac{(m-k)(n-k)+q}{r-k}\right)=0
	\]
	for $0\le k\le r-1$ and $0\le q\le r-k-1$. In particular, for $r \le m$ every
	pole of $Z^{\mathrm{mot}}_{I_r}(s)$ is a root of $b_{I_r}(s)$, i.e. the strong monodromy conjecture holds for determinantal varieties.
\end{maintheoremC}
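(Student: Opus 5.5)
We will reduce Theorem C to Theorem B by localizing at a suitable point, and then deduce the strong monodromy statement by comparing with the known list of poles of $Z^{\mathrm{mot}}_{I_r}$.

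\textbf{Localization.} Fix $0\le k\le r-1$. The roots of $b_I$ are governed by local Bernstein-Sato polynomials: $b_I$ is the least common multiple of the local $b$-functions $b_{I,x}$ over points $x$. Take a point $x_0$ of the space of $m\times n$ matrices with rank exactly $k$. Near $x_0$ there is the standard local product structure
\[
\bigl(\mathbb C^{mn},x_0\bigr)\cong \bigl(\mathbb C^{(m-k)(n-k)},0\bigr)\times\bigl(\mathbb C^{k(m+n-k)},0\bigr).
\]
It is obtained by row and column operations, i.e. a Schur complement. Under this isomorphism $I_r$ corresponds to the ideal $I'_{r-k}$ of $(r-k)\times(r-k)$ minors of a generic $(m-k)\times(n-k)$ matrix, pulled back along the projection. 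The Bernstein-Sato polynomial of an ideal is invariant under analytic coordinate changes and under adding smooth factors, by the Budur-Musta\c t\u a-Saito theory. Hence $b_{I'_{r-k}}$ divides $b_{I_r}$. It therefore suffices to treat $k=0$ for general $(m,n,r)$ with $r<m$.

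\textbf{Application of Theorem B.} The generators of $I_r$ are the $r\times r$ minors. They are homogeneous of degree $d=r$ in $N=mn$ variables. We must exhibit a point $a$ with $\operatorname{rank} dF(a)=mn$, i.e. a point where the differentials of all $r$-minors span the full cotangent space. The differential of the minor $\det X_{R,C}$ is the cofactor matrix supported on rows $R$ and columns $C$. Take $a$ generic, of full rank $m$. Since $r<m$, for any entry $(i,j)$ we can choose $R\ni i$ and $C\ni j$ together with the other $r-1$ rows and columns. The differentials then span $\mathrm{Hom}$-type tensors $\Lambda^{r-1}$-adjugates. Concretely, the span of $\{d\Delta_{R,C}(a)\}$ is the image of the map $\Lambda^{r-1}\mathbb C^m\otimes\Lambda^{r-1}\mathbb C^n\to \mathbb C^{m\times n}$ given by contraction with $\Lambda^{r-1}a$. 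This map is $GL_m\times GL_n$-equivariant, and for $a$ of rank $\ge r$ its image is all of $\mathbb C^{m\times n}$ because $r-1<m$.

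Proving this surjectivity cleanly is the main obstacle. We would first try $a=[I_m\,|\,0]$ and compute directly:
\begin{itemize}
\item for $j\le m$, the entry $(i,j)$ is hit by the minor on rows $R$ and columns $R$ with $i,j$ suitably placed;
\item for $j>m$, use rows $\{i\}\cup S$ and columns $\{j\}\cup S$ with $S$ an $(r-1)$-subset avoiding $i$; this exists because $r-1\le m-1$, and its differential at $a$ is $\pm E_{ij}$ plus a diagonal correction.
\end{itemize}
Theorem B then gives $b_{I_r}(-(mn+q)/r)=0$ for $0\le q<r$. Feeding this back through the localization yields the stated roots $-\frac{(m-k)(n-k)+q}{r-k}$.

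\textbf{Strong monodromy conjecture.} The poles of the motivic zeta function of determinantal ideals are known; see the computation used in \cite{CZ25}. They are contained in the set $\{-\frac{(m-k)(n-k)}{r-k}: 0\le k\le r-1\}$, which is the $q=0$ case above. This settles $r<m$. For $r=m$ the result follows from the explicit formula $\prod_{i=n-m+1}^n(s+i)$ of \cite{Bernstein_for_maximal_minors}, whose roots contain these candidate poles.
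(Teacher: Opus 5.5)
Your architecture is the paper's. You apply Theorem~\ref{rank_thm} at $A=(I_m\ 0)$ via a rank computation (the paper's Lemma~\ref{lemma_rank}). You then reduce $k\ge1$ to the $(r-k)\times(r-k)$ minors of an $(m-k)\times(n-k)$ matrix: the paper quotes \cite[Remark 4.3]{CZ26} here, and your Schur-complement localization at a rank-$k$ point is a correct way to see it. Finally you compare with the poles from \cite{CZ25}. Your treatment of $r=m$ is fine, since the candidate poles $-(n-k)$, $0\le k\le m-1$, are exactly the roots of $\prod_{i=n-m+1}^{n}(s+i)$. In the localization, add one sentence. The product structure gives $b_{I'_{r-k},0}=b_{I_r,x_0}$, while Theorem~\ref{rank_thm} controls the global $b_{I'_{r-k}}$. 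These agree because $I'_{r-k}$ is homogeneous: scaling preserves the ideal and moves every point into a neighbourhood of $0$ on which the local relation at $0$ holds, so $b_{I'_{r-k},x}\mid b_{I'_{r-k},0}$ for all $x$.

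The step that fails as written is the rank computation, which you rightly single out as the crux. At $A=(I_m\ 0)$ a principal minor has differential $d\delta_{RR}(A)=\sum_{i\in R}dx_{ii}$, since $\frac{d}{dt}\det(I_r+tH_{RR})|_{t=0}=\operatorname{tr}H_{RR}$. So your first bullet never detects an entry $(i,j)$ with $i\ne j\le m$.

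What works is the minor of your second bullet, with the $(r-1)$-set $K$ chosen to avoid \emph{both} $i$ and $j$. On rows $K\cup\{i\}$ and columns $K\cup\{j\}$ the submatrix of $A$ is $I_{r-1}$ bordered by a zero row and a zero column. Its differential is therefore exactly $\pm dx_{ij}$; there is no diagonal correction in either bullet. For $j\le m$ such a $K$ exists iff $r-1\le m-2$, which is precisely where $r<m$ enters.

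The diagonal needs its own argument. The relations $\sum_{i\in R}h_{ii}=0$ for all $r$-subsets $R\subset\{1,\dots,m\}$ force $h_{ii}=h_{jj}$ by swapping one element (possible since $r<m$), and then $rh_{11}=0$. This is exactly Lemma~\ref{lemma_rank}.

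Finally, drop the equivariance claim. It is \emph{not} true that $\operatorname{rank}dF(a)=mn$ for every $a$ of rank $\ge r$. For $m=n=3$, $r=2$, $a=\operatorname{diag}(1,1,0)$, every $2\times2$ minor of $a+tE_{12}$ is constant in $t$, so $E_{12}\in\ker dF(a)$. Rank $>r$ is what is needed. Moreover, equivariance only reduces full rank to the single point $(I_m\ 0)$, where the explicit computation above still has to be done.
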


This paper is organized as follows. In section \ref{sec2}, we give an introduction to the Bernstein-Sato polynomial, the motivic zeta function and the monodromy conjecture. In section \ref{sec3}, we prove our main theorems, including Theorem A, Theorem B and Theorem C.

\section{Preliminaries}\label{sec2}
\subsection{Bernstein-Sato Polynomials}
Throughout Section 2, let $X$ be a smooth complex algebraic variety over $\mathbb C$, let $\mathcal O_X$ be its structure sheaf, and let $\mathcal D_X$ be the sheaf of $\mathbb C$-linear differential operators on $X$. For every open subset $U\subseteq X$, write $\mathcal D_U=\mathcal D_X|_U$. All constructions below are local on $X$. Let us begin with the case of a single regular function. For a detailed introduction, see \cite{Popa_ref}.
\begin{definition}\label{def_b_function}
	Let $f\in\Gamma(X,\mathcal O_X)$ be a nonconstant regular function. There are \(P(s)\in\mathcal{D}_X[s]\) and \(b(s)\in\mathbb{C}[s]\) such that \(P(s)f^{s+1}=b(s)f^s\), where \(s\) is an indeterminate (see, for example, \cite{Popa_ref}). The set of all such polynomials \(b(s)\) forms an ideal in \(\mathbb{C}[s]\). Since \(\mathbb{C}[s]\) is a principal ideal domain, this ideal has a unique monic generator, which we denote by \(b_f(s)\); this is the Bernstein-Sato polynomial of \(f\).
\end{definition}

\begin{remark}
	By definition, the Bernstein-Sato polynomial of $f$ is the minimal polynomial of the action of $s$ on the sheaf quotient $\frac{\mathcal D_X[s]f^s}{\mathcal D_X[s]f^{s+1}}$.
\end{remark}

Kashiwara proved that the roots of the Bernstein--Sato polynomial are negative rational numbers.

\begin{theorem}[\cite{Rationality_of_Roots_of_B-Function_Kashiwara}]
Every root of a Bernstein-Sato polynomial is a negative rational number.
\end{theorem}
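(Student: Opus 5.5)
We will follow Kashiwara's argument: reduce rationality and negativity of the roots to geometry through an embedded resolution of singularities of the hypersurface $f=0$. The statement is local on $X$, so we may work in a neighbourhood of a point of $f^{-1}(0)$. Away from $f^{-1}(0)$ the module $\mathcal D_X[s]f^s/\mathcal D_X[s]f^{s+1}$ vanishes, since $f$ is invertible there.

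The first step is to take a proper morphism $\pi\colon Y\to X$ from a smooth $Y$, isomorphic outside $f^{-1}(0)$, such that $g=f\circ\pi$ has normal crossings. Locally on $Y$ we then have
\[
g=u\,y_1^{a_1}\cdots y_n^{a_n}
\]
with $u$ a unit and $a_i\in\mathbb N$. For such a monomial there is an explicit functional equation,
\[
\prod_i \partial_{y_i}^{a_i}\, g^{s+1}=\prod_i\prod_{j=1}^{a_i}(a_is+j)\,g^{s}
\]
after absorbing the unit by a change of coordinates where possible, or via analytic arguments. Hence $b_g$ divides a product of factors $(s+j/a_i)$, all of whose roots are negative rationals.

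The second step is to push forward. We would show that $\mathcal D_X[s]f^s$ is related to $\int_\pi \mathcal D_Y[s]g^s$. Kashiwara's key lemma states: there is an integer $N$ and a polynomial $b(s)$ such that
\[
b(s)\prod_{k=0}^{N}b_g(s+k)
\]
annihilates $\mathcal D_X[s]f^s/\mathcal D_X[s]f^{s+1}$, with $b$ having roots shifted from those of $b_g$ by nonnegative integers. This uses properness of $\pi$ and coherence of direct images of $\mathcal D$-modules, applied as an analogue of Grauert's theorem to the filtered modules $\mathcal D_Y[s]g^{s}\supset\mathcal D_Y[s]g^{s+1}\supset\cdots$. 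The consequence is that $b_f$ divides $\prod_{k=0}^N b_g(s+k)$. This yields rationality. It also gives negativity once the shifts go in the correct direction: the roots have the form $-j/a_i-k$ with $j\ge1$ and $k\ge0$.

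Negativity can also be seen independently. If $b_f(\lambda)=0$ with $\lambda\ge0$, specialise to $s=\lambda$ and integrate $|f|^{2s}$ against test forms. The current $|f|^{2s}$ is holomorphic in $s$ for $\operatorname{Re}s>-\epsilon$, and its poles occur only at roots of $b_f(s)-k$ type shifts, which gives a contradiction. This route is cleaner.

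The main obstacle is the pushforward step. Controlling $\int_\pi$ of the module requires holonomicity, since the $\mathcal D_X[s]f^s$ side must be understood via $\mathcal D_X[s]$-coherence, and the direction of the integer shifts has to be tracked carefully. In this paper we would simply cite \cite{Rationality_of_Roots_of_B-Function_Kashiwara} for this step rather than reprove it.
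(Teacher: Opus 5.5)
The paper does not prove this statement; it simply cites Kashiwara. Your main line follows the cited argument, and the hard pushforward step is deferred to the same reference the paper relies on. That main line is: take a resolution making $g=f\circ\pi$ a normal-crossings function, compute its $b$-function, then invoke Kashiwara's comparison. Your monomial computation is correct: $\partial_{y_i}^{a_i}y_i^{a_i(s+1)}=\prod_{j=1}^{a_i}(a_is+j)\,y_i^{a_is}$, and locally analytically the unit can be absorbed into a coordinate with $a_i>0$. Hence the roots of $b_g$ lie in $[-1,0)\cap\mathbb Q$.

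One step is wrong, though you do not need it. The ``independent, cleaner'' proof of negativity via $|f|^{2s}$ does not work. The functional equation $b_f(s)f^s=P(s)f^{s+1}$ only shows that the poles of the meromorphic continuation of $s\mapsto\int|f|^{2s}\varphi$ lie in $\{\alpha-k : b_f(\alpha)=0,\ k\in\mathbb N\}$. This inclusion goes one way only: a root of $b_f$ need not produce any pole. So holomorphy of the current on $\operatorname{Re}s>-\epsilon$ says nothing about whether $b_f$ has a root $\lambda\ge0$, and no contradiction arises.

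Drop that paragraph. Negativity needs no separate argument, because it is built into the divisibility $b_f(s)\mid\prod_{k=0}^{N}b_g(s+k)$. The right-hand side has roots $\alpha-k$ with $\alpha\in[-1,0)\cap\mathbb Q$ and $k\ge0$, so the ``direction of the shifts'' is already settled by the form of the lemma.

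Relatedly, remove the extra factor $b(s)$ from your statement of the key lemma. As written, it is inconsistent with the conclusion $b_f(s)\mid\prod_{k=0}^{N}b_g(s+k)$ that you draw from it. The form usually quoted is exactly that divisibility for some $N$, for $\pi$ proper and an isomorphism off $f^{-1}(0)$. In some formulations $b_g$ is replaced by the $b$-function of the Jacobian-twisted section; this changes nothing about rationality or negativity.
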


There also exists a local version of the Bernstein-Sato polynomial, with the following definition.
\begin{definition}
	Let \(x\in X\), and let \(f_x\in\mathcal{O}_{X,x}\) denote the germ of \(f\) at \(x\), where \(f\) is a nonzero regular function. The local Bernstein-Sato polynomial of $f$ at $x$, denoted by $b_{f,x}(s)$, is the unique monic polynomial of smallest degree such that
	$$
	b_{f,x}(s)f_x^s
	\in
	\mathcal D_{X,x}[s]\cdot f_x^{s+1}.
	$$
	Equivalently, there exists $P_x(s)\in\mathcal D_{X,x}[s]$
	such that $P_x(s)f_x^{s+1}=b_{f,x}(s)f_x^s$.
\end{definition}

For an open subset \(U\subseteq X\), let \(b_{f,U}(s)\) denote the Bernstein-Sato polynomial of the restriction \(f|_U\), as in Definition \ref{def_b_function}. In particular, $b_f(s)=b_{f,X}(s)$.

\begin{theorem}[Local-global relation for one function]\label{thm:local-global-function}
	Let $f\in\Gamma(X,\mathcal O_X)$ be a nonzero regular function. For every open subset $U\subseteq X$, $b_{f,U}(s)=\operatorname{lcm}_{x\in U}b_{f,x}(s)$.
	Consequently, if $V\subseteq U$ is open, then $b_{f,V}(s)\mid b_{f,U}(s)$
	and for every $x\in U$, $b_{f,x}(s)\mid b_{f,U}(s)$.
	In particular, $b_f(s)=\operatorname{lcm}_{x\in X}b_{f,x}(s)$,
	and hence $b_{f,x}(s)\mid b_f(s)$
	for every $x\in X$.
\end{theorem}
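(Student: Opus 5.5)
The plan is to compare, for each open set, the $\mathcal D_U[s]$-module $M_U=\mathcal D_U[s]f^s/\mathcal D_U[s]f^{s+1}$ with its stalks. The remark after Definition \ref{def_b_function} identifies $b_{f,U}(s)$ with the minimal polynomial of the action of $s$ on $M_U$. In the same way, $b_{f,x}(s)$ is the minimal polynomial of $s$ on the stalk $M_x=\mathcal D_{X,x}[s]f_x^s/\mathcal D_{X,x}[s]f_x^{s+1}$.

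First I check that the quotient sheaf is compatible with localization: $(M_U)_x=M_x$. This holds because stalks commute with quotients and with the coherent submodule generated by $f^{s+1}$.

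\emph{The divisibility $b_{f,x}\mid b_{f,U}$.} Given a global relation $P(s)f^{s+1}=b_{f,U}(s)f^s$ on $U$, take germs at $x$. This gives the same relation in $\mathcal D_{X,x}[s]$, so $b_{f,U}$ lies in the ideal of local $b$-functions at $x$, and hence $b_{f,x}\mid b_{f,U}$. By the same restriction argument, a relation on $U$ restricts to one on $V\subseteq U$, which gives $b_{f,V}\mid b_{f,U}$. In particular $L:=\operatorname{lcm}_{x\in U}b_{f,x}$ divides $b_{f,U}$. This lcm is a priori over infinitely many $x$, but it is well defined because every $b_{f,x}$ divides the fixed polynomial $b_{f,U}$, and only finitely many monic divisors exist.

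\emph{The converse $b_{f,U}\mid L$.} Consider the section $L(s)[f^s]$ of $M_U$. Its germ at each $x$ vanishes, since $b_{f,x}\mid L$ gives $L(s)f_x^s\in\mathcal D_{X,x}[s]f_x^{s+1}$. A section of a sheaf whose germs all vanish is zero, so $L(s)[f^s]=0$ in $M_U(U)$, that is, in the sheaf quotient. The problem is that we need an actual global operator: a section of the quotient sheaf lifts only locally, while a global $P(s)$ is what the definition asks for.

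To get a global $P$, I work affine-locally. The statement is local, and the paper says "all constructions are local", so I would either define $b_{f,U}$ through the sheaf quotient as in the remark or reduce to affine $U$. On an affine $U$, $\mathcal D_U[s]f^{s+1}$ is a quasi-coherent $\mathcal O_U$-module, because it is a $\mathcal D$-submodule of $\mathcal O_U[f^{-1},s]f^s$ generated by one section. Its first cohomology on $U$ then vanishes, so global sections of the quotient are the quotient of global sections. The vanishing section $L(s)[f^s]$ therefore lifts to a global identity $L(s)f^s=P(s)f^{s+1}$ with $P(s)\in\Gamma(U,\mathcal D_X)[s]$. This gives $b_{f,U}\mid L$ and hence equality.

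For general $U$, I would interpret $b_{f,U}$ as the minimal polynomial on the sheaf, which is the remark's definition, and then the stalkwise argument already suffices. The consequences follow formally: $b_{f,V}=\operatorname{lcm}_{x\in V}b_{f,x}$ divides $\operatorname{lcm}_{x\in U}b_{f,x}$, and taking $U=X$ gives the last statements.

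The main obstacle is this global lifting step. It needs quasi-coherence of the module $\mathcal D_U[s]f^{s+1}$, equivalently Serre vanishing on affines, or else the convention that $b$ is defined as the minimal polynomial on the quotient sheaf.
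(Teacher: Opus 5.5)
The paper states this theorem as background in Section~\ref{sec2} and gives no proof, so there is no argument of the paper's to compare with. Judged on its own, your argument is correct and is the standard one:
\begin{itemize}
\item the stalks of $M_U$ are the local quotients $M_x$;
\item the divisibilities $b_{f,x}\mid b_{f,U}$ and $b_{f,V}\mid b_{f,U}$ come from restriction;
\item the converse comes from the section $L(s)[f^s]$ having zero germs.
\end{itemize}

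One step should be stated more precisely. In the affine lifting, the vanishing of $H^1(U,\mathcal D_U[s]f^{s+1})$ is not what is used. The element $L(s)f^s$ is already a global section of $\mathcal D_U[s]f^s$, and left exactness of $\Gamma$ alone places it in $\Gamma(U,\mathcal D_U[s]f^{s+1})$. What you actually need is $\Gamma(U,\mathcal D_U[s]f^{s+1})=\Gamma(U,\mathcal D_U)[s]\cdot f^{s+1}$, i.e.\ surjectivity of $P\mapsto Pf^{s+1}$ on global sections. This holds because that map is left $\mathcal O_U$-linear between quasi-coherent $\mathcal O_U$-modules. Its kernel $\operatorname{Ann}(f^{s+1})$ is therefore quasi-coherent and has no $H^1$ on affine $U$. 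Equivalently, for $U=\operatorname{Spec}A$ the image sheaf is the sheaf attached to the $A$-module $\Gamma(U,\mathcal D_U)[s]f^{s+1}$.

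Your caveat about non-affine $U$ is necessary, not just cautious. Under the literal global-operator reading of Definition~\ref{def_b_function}, the first assertion of the theorem fails. Take $U=\mathbb A^2\setminus\{0\}$ and $f=x^2+y^3$. Three facts hold: $\mathcal D_{\mathbb A^2}$ is a free $\mathcal O$-module, $\Gamma(U,\mathcal O)=\mathbb C[x,y]$, and $D(f)\subset U$ since $f(0)=0$. Hence $\Gamma(U,\mathcal D)=\Gamma(\mathbb A^2,\mathcal D)$ and $\Gamma(U,\mathcal O[f^{-1},s]f^s)=\mathbb C[x,y]_f[s]f^s$. A global identity on $U$ is therefore the same as one on $\mathbb A^2$. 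So the global-operator $b$-function of $f|_U$ is $(s+1)(s+\frac{5}{6})(s+\frac{7}{6})$, while $\operatorname{lcm}_{x\in U}b_{f,x}=s+1$.

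Consequently the theorem holds for affine $U$, or for all $U$ when $b_{f,U}$ means the minimal polynomial of $s$ on the sheaf $M_U$. The latter is the convention you adopt, and "reducing to affine $U$" cannot rescue the literal reading. In that convention, your lcm argument presupposes that $b_{f,U}$ exists. This should come from a finite affine cover of the quasi-compact $U$ together with Bernstein's theorem on each piece, not from restricting a global operator, since none need exist.
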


Budur, Mustaţă, and Saito introduced Bernstein-Sato polynomials for arbitrary closed subschemes (equivalently, coherent ideal sheaves) using the \(V\)-filtration; see \cite[Sections 2.4 and 2.10]{Bernstein_Sato_polynomial_of_arbitrary_varieties}. This construction extends the classical hypersurface case. By \cite[Theorem~2.5]{Bernstein_Sato_polynomial_of_arbitrary_varieties}, it is independent of the chosen generators of the ideal.

Before we give the definition of Bernstein-Sato polynomial for an ideal, let us fix some notation. Let $\mathfrak a\subseteq\mathcal O_X$ be a nonzero coherent ideal sheaf. Let $U\subseteq X$ be an open subset on which $\mathfrak a$ is generated by nonzero regular functions
$\mathfrak a|_U=(f_1,\ldots,f_r)$. Let $s_1,\ldots,s_r$ be independent variables, and put $s=s_1+\cdots+s_r$.
We regard the formal symbol \(f_1^{s_1}\cdots f_r^{s_r}\) as an element of \(\mathcal{O}_U[f_1^{-1},\ldots,f_r^{-1},s_1,\ldots,s_r]f_1^{s_1}\cdots f_r^{s_r}\). For a positive integer $m$, we use the binomial notation $\binom{s_i}{m}=\frac{s_i(s_i-1)\cdots(s_i-m+1)}{m!}$.

\begin{definition}[Bernstein-Sato polynomial of an ideal]\label{def_b_function_ideal_open_subset}
	With the notation above, the Bernstein-Sato polynomial of $\mathfrak a$ on $U$, denoted by $b_{\mathfrak a,U}(s)$, is the monic polynomial of smallest degree such that
	\begin{align}\label{def_b_function_ideal}
		b_{\mathfrak{a},U}(s)f_1^{s_1}\cdots f_r^{s_r}
		\in
		\sum_{\substack{u\in \mathbb{Z}^r\\ |u|=1}}
		\mathcal{D}_U[s_1,\ldots,s_r]\cdot
		\left(
		\prod_{u_i<0}\binom{s_i}{-u_i}
		\right)
		f_1^{s_1+u_1}\cdots f_r^{s_r+u_r},
	\end{align}
	where $|u|=u_1+\cdots+u_r$.
	Mustaţă uses this formulation in \cite[Introduction]{Mus22}. It is equivalent to the definition of Budur, Mustaţă, and Saito in \cite[Section~2.10]{Bernstein_Sato_polynomial_of_arbitrary_varieties}.
\end{definition}

In the case $r=1$, the only vector $u\in\mathbb Z$ with $|u|=1$ is $u=1$, and the definition reduces to $b_{\mathfrak a,U}(s)f^s
\in
\mathcal D_U[s]\cdot f^{s+1}$.
Thus, if $\mathfrak a|_U=(f)$, then $b_{\mathfrak a,U}(s)=b_{f,U}(s)$, where $b_{f,U}(s)$ denotes the Bernstein-Sato polynomial of the restriction of $f$ to $U$. Hence $b_{\mathfrak a,U}(s)$ extends the Bernstein-Sato polynomial of a single regular function to coherent ideal sheaves.

The role of these binomial coefficients may not be immediately clear. The \(V\)-filtration provides a useful way to understand their appearance. Let
$\mathbf f=(f_1,\ldots,f_r):U\longrightarrow\mathbb A^r$
and let $i_{\mathbf f}:U\hookrightarrow Y:=U\times\mathbb A^r$
be its graph embedding. Denote the coordinates on $\mathbb{A}^r$ by $t_1,\ldots,t_r$.
The direct image \((i_{\mathbf f})_+\mathcal{O}_U\) is a regular holonomic \(\mathcal{D}_Y\)-module and is isomorphic to \(\mathcal{D}_Y f_1^{s_1}\cdots f_r^{s_r}\), where \(\mathcal{D}_Y\) denotes the sheaf of differential operators on \(Y\). 
Consider the ambient module $\mathcal N_{\mathbf{f}}=\mathcal O_U[(f_{1}...f_{r})^{-1},s_{1},...,s_{r}]f_{1}^{s_{1}}...f_{r}^{s_{r}}$,
with the following actions:
$$t_{i}(h(x,s_{1},...,s_{r})f_{1}^{s_{1}}...f_{r}^{s_{r}})=f_{i}\,h(x,s_{1},...,s_{i}+1,...,s_{r})f_{1}^{s_{1}}...f_{r}^{s_{r}}$$ and

$$\partial_{t_{i}}(h(x,s_{1},...,s_{r})f_{1}^{s_{1}}...f_{r}^{s_{r}})=-s_{i}f_{i}^{-1}\,h(x,s_{1},...,s_{i}-1,...,s_{r})f_{1}^{s_{1}}...f_{r}^{s_{r}}.$$
The graph module $(i_\mathbf{f})_+\mathcal O_U$ is identified with
the cyclic submodule $\mathcal D_{Y}f_{1}^{s_{1}}...f_{r}^{s_{r}}\subseteq\mathcal N_{\mathbf{f}}$.
Expressions involving $t_{i}^{-1}$ are understood
in the ambient module. Let $s_{ij}:=-\partial_{t_{i}}t_{j}$. Equivalently, \(b_{\mathfrak a,U}(s)\) is the minimal polynomial of the action of \(s=\sum_{i=1}^{r}-\partial_{t_i}t_i\) on
$$\frac{\mathcal{D}_{U}[s_{i},s_{ij}]f_{1}^{s_{1}}...f_{r}^{s_{r}}}{\sum_{k=1}^{r}\mathcal{D}_{U}[s_{i},s_{ij}]f_{k}\cdot f_{1}^{s_{1}}...f_{r}^{s_{r}}}.$$
\begin{remark}\label{uniqueness}
The module $\mathcal{D}_{Y}f_{1}^{s_{1}}...f_{r}^{s_{r}}$ has many good properties. One of them is that its elements can be written uniquely as a sum of terms $h_{\alpha_{1},...,\alpha_{r}}(x)\partial_{t_{1}}^{\alpha_{1}}...\partial_{t_{r}}^{\alpha_{r}}f_{1}^{s_{1}}...f_{r}^{s_{r}}$, where $h_{\alpha_{1},...,\alpha_{r}}(x) \in \mathcal{O}_{U}$. In fact,
by the graph-module action, the elements
$\partial_{t_{1}}^{\alpha_{1}}...\partial_{t_{r}}^{\alpha_{r}}f_{1}^{s_{1}}...f_{r}^{s_{r}}$ span $\mathcal{D}_{Y}f_{1}^{s_{1}}...f_{r}^{s_{r}}$ over
$\mathcal{O}_{U}$. In the ambient module $\mathcal N_{\mathbf{f}}=\mathcal O_U[(f_{1}...f_{r})^{-1},s_{1},...,s_{r}]f_{1}^{s_{1}}...f_{r}^{s_{r}}$
they satisfy
\[
\partial_{t_{1}}^{\alpha_{1}}...\partial_{t_{r}}^{\alpha_{r}}f_{1}^{s_{1}}...f_{r}^{s_{r}}
=
(-1)^{|\alpha|}
\prod_{j=1}^r
\left((s_j)_{\alpha_j}f_j^{-\alpha_j}\right)f^s,
\qquad
(s_j)_k:=s_j(s_j-1)\cdots(s_j-k+1),
\]
with $(s_j)_0=1$. Since the products
$\prod_j(s_j)_{\alpha_j}$ form a basis of
$\mathbb C[s_1,\ldots,s_r]$, these elements are linearly independent
over $\mathcal{O}_{U}$. Hence every element of $\mathcal{D}_{Y}f_{1}^{s_{1}}...f_{r}^{s_{r}}$
has a unique finite expression  as a sum of terms $h_{\alpha_{1},...,\alpha_{r}}(x)\partial_{t_{1}}^{\alpha_{1}}...\partial_{t_{r}}^{\alpha_{r}}f_{1}^{s_{1}}...f_{r}^{s_{r}}$. 
\end{remark}

Budur, Mustaţă, and Saito proved that \(b_{\mathfrak a,U}(s)\) exists and is independent of the choice of generators; see \cite[Theorem~2.5]{Bernstein_Sato_polynomial_of_arbitrary_varieties}. 
\begin{theorem}[Existence and independence of the choice of generators]
	Let $\mathfrak a\subseteq\mathcal O_X$ be a nonzero coherent ideal sheaf. On an open subset $U\subseteq X$, suppose that
	$\mathfrak a|_U=(f_1,\ldots,f_r)=(g_1,\ldots,g_m)$.
	Then $b_{\mathfrak a,U}(s)$ exists, and the two systems of generators define the same polynomial.
\end{theorem}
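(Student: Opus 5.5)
Existence and independence of generators. Throughout we use the formulation of Definition \ref{def_b_function_ideal_open_subset}, viewed through the $V$-filtration on the graph module $\mathcal D_Y f_1^{s_1}\cdots f_r^{s_r}$ described above. The plan has three steps: prove existence from holonomicity via a Bernstein-type argument, then reduce independence to a change of generators, and finally compare the two sides through a common graph embedding.

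\textbf{Existence.} We look at the quotient
\[
M=\mathcal D_U[s_i,s_{ij}]f^s\big/\textstyle\sum_k \mathcal D_U[s_i,s_{ij}]f_kf^s.
\]
Since $(i_{\mathbf f})_+\mathcal O_U$ is regular holonomic, it carries a Kashiwara--Malgrange $V$-filtration along $t_1=\cdots=t_r=0$, indexed discretely by rationals. Here $\mathcal D_U[s_i,s_{ij}]f^s$ sits inside $V^0\mathcal D_Y\cdot f^s$, and the term $\sum_k \mathcal D_U[s_i,s_{ij}]f_kf^s$ is $\sum_k V^0\mathcal D_Y\, t_k f^s$, which contains $V^1\mathcal D_Y\cdot f^s$. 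The quotient $M$ is therefore a quotient of $V^0\mathcal D_Y f^s/V^1\mathcal D_Y f^s$. This module is finitely generated over $V^0\mathcal D_Y$ and has finite length in the $V$-direction. On it, $\sum_i \partial_{t_i}t_i$ acts, and on each graded piece $\mathrm{Gr}_V^\alpha$ its action is $\alpha$ plus a nilpotent. Hence $s=-\sum\partial_{t_i}t_i$ satisfies a polynomial equation on $M$, which gives $b_{\mathfrak a,U}$. We can work locally and use the fact that Bernstein--Sato polynomials commute with lcm's over a cover, as in Theorem \ref{thm:local-global-function}, to pass to the global statement.

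\textbf{Independence: reduction to adding a generator.} Given two generating systems, we compare each of them with the concatenated system $(f_1,\ldots,f_r,g_1,\ldots,g_m)$. It therefore suffices to treat the case where $g_1,\ldots,g_m$ are obtained by adding $g=\sum_j a_jf_j$ to $f_1,\ldots,f_r$. The analogous statement for removing a generator follows by symmetry.

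\textbf{Independence: comparing the two graph embeddings.} Take the graph embedding into $U\times\mathbb A^{r+1}$. The automorphism $(x,t,t_{r+1})\mapsto (x,t,t_{r+1}-\sum a_j(x)t_j)$ identifies this embedding with the composite of the graph of $\mathbf f$ and the zero section in the last coordinate. This linear change of coordinates is $x$-dependent, so it preserves the $V$-filtration along the linear subspace $\{t=0\}$ together with the Euler field $\sum\partial_{t_i}t_i$, up to terms lying in $V^{>0}$. The Euler field is intrinsic because the $V$-filtration along a smooth subvariety depends only on the subvariety. After this change of coordinates the module becomes $(i_{\mathbf f})_+\mathcal O\boxtimes\delta$. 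For this product the operator $s$ and the quotient $V^0/\sum t_kV^0$ correspond to those of $\mathbf f$, up to a shift by the codimension which cancels against the convention $s=\sum s_i$. The minimal polynomials therefore agree.

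\textbf{Main obstacle.} The delicate step is the bookkeeping in this last identification. One has to show that the submodule $\mathcal D_U[s_i,s_{ij}]f^s$, which is generated by $f^s$ and not by all of $V^0$, is carried to the corresponding submodule for the new system. The first thing to try is the explicit formulas for $t_i,\partial_{t_i}$ on $\mathcal N_{\mathbf f}$: check that $f^s\cdot g^{s_{r+1}}$ generates over $V^0\mathcal D$ the image of $f^s$, using that $s_{ij}$ and $t_i$ transform linearly. This is exactly \cite[Theorem~2.5]{Bernstein_Sato_polynomial_of_arbitrary_varieties}, and if the direct computation becomes unwieldy we would invoke that result.
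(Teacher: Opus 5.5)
The paper gives no argument for this statement: it simply cites \cite[Theorem~2.5]{Bernstein_Sato_polynomial_of_arbitrary_varieties}, which is also your fallback. Your outline is the standard argument behind that citation, and its architecture is sound: existence from the $V$-filtration, and independence by adding one generator $g=\sum_j a_jf_j$ and applying $\Psi\colon(x,t,t_{r+1})\mapsto(x,t,t_{r+1}-\sum_j a_j(x)t_j)$ on $Y':=U\times\mathbb A^{r+1}$ to reduce to the tuple $(\mathbf f,0)$. However, the obstacle you single out is not one, and the step that carries the content is handled by a sentence that is false as written.

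On the first point: every element of $V^0\mathcal D_Y$ is a sum $\sum_\alpha P_\alpha t^\alpha$ with $P_\alpha\in\mathcal D_U[s_{ij}]$, and $t^\alpha f^s=f^\alpha f^s$. Hence $\mathcal D_U[s_i,s_{ij}]f^s=V^0\mathcal D_Yf^s$, and likewise $\sum_k\mathcal D_U[s_i,s_{ij}]f_kf^s=V^1\mathcal D_Yf^s$, so your $M$ is exactly $V^0\mathcal D_Yf^s/V^1\mathcal D_Yf^s$. The map $\Psi$ preserves the ideal $(t_1,\dots,t_{r+1})$, hence $V^\bullet\mathcal D_{Y'}$. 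It preserves $\sum_i\partial_{t_i}t_i$ exactly, not just modulo $V^{>0}$, because it is linear on the fibres. It sends the generator $\delta(t-(\mathbf f,g))$ to $f^s\otimes\delta$ with $\delta:=\delta(t_{r+1})$. So the generator bookkeeping is automatic, and no computation with $f^sg^{s_{r+1}}$ is needed. Note only that $(\mathbf f,0)$ is not admissible in the formal-symbol definition, so this step must be run entirely in the graph-module language.

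The genuine gap is your claim that for $(\mathbf f,0)$ the quotient ``corresponds to that of $\mathbf f$, up to a shift by the codimension which cancels''. There is no shift at all, since $\partial_{t_{r+1}}t_{r+1}\delta=0$, and the quotient is strictly larger. Write $U^k:=V^k\mathcal D_Y\cdot f^s$. A direct computation gives
\[
V^0\mathcal D_{Y'}(f^s\otimes\delta)=\bigoplus_{k\ge0}U^k\otimes\partial_{t_{r+1}}^k\delta,
\qquad
V^1\mathcal D_{Y'}(f^s\otimes\delta)=\bigoplus_{k\ge0}U^{k+1}\otimes\partial_{t_{r+1}}^k\delta .
\]
So the new quotient is $\bigoplus_{k\ge0}(U^k/U^{k+1})\otimes\partial_{t_{r+1}}^k\delta$. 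The total $s$ acts on the $k$-th summand as $s+k$, because $-\partial_{t_{r+1}}t_{r+1}\,\partial_{t_{r+1}}^k\delta=k\,\partial_{t_{r+1}}^k\delta$.

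You must show that the summands with $k\ge1$ produce no new roots. We have $U^k=\sum_{|\alpha|=k}\mathcal D_U[s_{ij}]\,t^\alpha f^s+U^{k+1}$, $s$ commutes with $\mathcal D_U[s_{ij}]$, and $st^\alpha=t^\alpha(s-k)$. These give
\[
b_{\mathbf f}(s+k)\,P\,t^\alpha f^s=P\,t^\alpha\, b_{\mathbf f}(s)f^s\in U^{k+1}.
\]
Hence $b_{\mathbf f}$ kills every summand, while the $k=0$ summand is exactly $M$, so the minimal polynomial is $b_{\mathbf f}$. This is the argument that has to replace your sentence.

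The existence step has a smaller omission of the same kind. The phrase ``finite length in the $V$-direction'' conceals the one real input. Locally there are $a\le b$ with $V^0\mathcal D_Yf^s\subseteq V^a\mathcal M$ and $V^b\mathcal M\subseteq V^1\mathcal D_Yf^s$. This comes from comparing the good filtration $V^\bullet\mathcal D_Yf^s$ with the Kashiwara--Malgrange filtration of $\mathcal M=(i_{\mathbf f})_+\mathcal O_U$. Once that is stated, your existence argument is the standard one.
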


As for a single regular function, one can also define a local Bernstein-Sato polynomial for an ideal.
\begin{definition}
	Let $\mathfrak a\subseteq\mathcal O_X$ be a nonzero coherent ideal sheaf and let $x\in X$. Choose generators $\mathfrak a_x=(f_1,\ldots,f_r)$ in $\mathcal O_{X,x}$, and put $s=s_1+\cdots+s_r$.
	The local Bernstein-Sato polynomial of $\mathfrak a$ at $x$, denoted by $b_{\mathfrak a,x}(s)$, is the unique monic polynomial of smallest degree such that $b_{\mathfrak a,x}(s_1+\cdots+s_r)
	f_1^{s_1}\cdots f_r^{s_r}$
	belongs to
	$
	\sum_{\substack{u\in\mathbb Z^r\\ |u|=1}}
	\mathcal D_{X,x}[s_1,\ldots,s_r]
	\left(
	\prod_{u_i<0}\binom{s_i}{-u_i}
	\right)
	f_1^{s_1+u_1}\cdots f_r^{s_r+u_r}$.
	By \cite[Theorem 2.5]{Bernstein_Sato_polynomial_of_arbitrary_varieties}, this polynomial is independent of the chosen generators of $\mathfrak a_x$.
\end{definition}

\begin{definition}
	For every open subset $U\subseteq X$, define
	$$
	b_{\mathfrak a,U}(s)
	:=
	\operatorname{lcm}_{x\in U}b_{\mathfrak a,x}(s).
	$$
	By \cite[Section~2.10, formula~(2.10.1)]{Bernstein_Sato_polynomial_of_arbitrary_varieties}, this definition agrees with Definition~\ref{def_b_function_ideal_open_subset} whenever \(\mathfrak a|_U\) is generated by regular functions on \(U\). The global Bernstein-Sato polynomial of $\mathfrak a$ is defined by
	$$
	b_{\mathfrak a}(s)
	:=
	b_{\mathfrak a,X}(s)
	=
	\operatorname{lcm}_{x\in X}b_{\mathfrak a,x}(s).
	$$
\end{definition}

\begin{theorem}[Local-global relation for ideals]\label{thm:local-global-ideal}
	Let $\mathfrak a\subseteq\mathcal O_X$ be a nonzero coherent ideal sheaf. If $V\subseteq U\subseteq X$ are open subsets, then $b_{\mathfrak a|_V,V}(s)
	\mid
	b_{\mathfrak a|_U,U}(s)$.
	Moreover, for every $x\in U$,
	$b_{\mathfrak a,x}(s)
	\mid
	b_{\mathfrak a|_U,U}(s)$.
	In particular, $b_{\mathfrak a,x}(s)\mid b_{\mathfrak a}(s)$
	for every $x\in X$.
\end{theorem}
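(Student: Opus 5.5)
The claim is that $b_{\mathfrak a,U}(s)$ is well behaved under restriction: for open $V\subseteq U$, $b_{\mathfrak a|_V,V}\mid b_{\mathfrak a|_U,U}$, and every local polynomial $b_{\mathfrak a,x}$ with $x\in U$ divides $b_{\mathfrak a|_U,U}$. I will argue directly from the definition $b_{\mathfrak a,U}(s)=\operatorname{lcm}_{x\in U}b_{\mathfrak a,x}(s)$, reducing everything to a statement about least common multiples in the principal ideal domain $\mathbb C[s]$. The lcm is taken over a possibly infinite family, so the first step is to make sense of it. Consider the ideal $J_U=\bigcap_{x\in U}(b_{\mathfrak a,x})\subseteq\mathbb C[s]$. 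I need $J_U\neq 0$, so that $\operatorname{lcm}$ means the monic generator of $J_U$.

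For nonvanishing, I will use that on a finite affine open cover $U=\bigcup U_i$ on which $\mathfrak a$ is generated by regular functions, Definition \ref{def_b_function_ideal_open_subset} produces a nonzero polynomial $b_{\mathfrak a,U_i}$. This polynomial exists by the existence theorem of Budur--Mustaţă--Saito. Localizing the defining relation \eqref{def_b_function_ideal} at a point $x\in U_i$ shows $b_{\mathfrak a,U_i}$ lies in the ideal defining $b_{\mathfrak a,x}$, so $b_{\mathfrak a,x}\mid b_{\mathfrak a,U_i}$. Hence $\prod_i b_{\mathfrak a,U_i}\in J_U$, and $J_U\neq0$. Quasi-compactness of $U$ holds since $X$ is a variety; this finiteness is the only point needing care.

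With this in place, both divisibilities are formal. For $V\subseteq U$ we have $J_U\subseteq J_V$, because intersecting over the smaller index set $V$ gives a larger ideal. Passing to monic generators yields $b_{\mathfrak a|_V,V}\mid b_{\mathfrak a|_U,U}$. Similarly, for $x\in U$ we have $J_U\subseteq(b_{\mathfrak a,x})$, which gives $b_{\mathfrak a,x}\mid b_{\mathfrak a|_U,U}$. Taking $U=X$ gives the final assertion.

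If one instead works with Definition \ref{def_b_function_ideal_open_subset} directly, the compatibility formula (2.10.1) of Budur--Mustaţă--Saito identifies the two notions, so no further argument is needed. The main (mild) obstacle is the well-definedness of the lcm over infinitely many points, handled as above.
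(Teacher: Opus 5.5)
Your argument is correct and matches the paper's approach. The paper gives no separate proof: it treats the statement as an immediate consequence of defining $b_{\mathfrak a,U}(s)$ as $\operatorname{lcm}_{x\in U}b_{\mathfrak a,x}(s)$, with formula (2.10.1) cited for compatibility with the generator-based definition, and your inclusions $J_U\subseteq J_V$ and $J_U\subseteq(b_{\mathfrak a,x})$ in $\mathbb C[s]$ are exactly that step. Your extra check that $J_U\neq 0$, via a finite affine cover and $b_{\mathfrak a,x}\mid b_{\mathfrak a,U_i}$, spells out why the lcm over infinitely many points is well defined, which the paper leaves implicit.
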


\subsection{Motivic Zeta Functions and the Monodromy Conjecture}\label{monodromy_conj}

Originally formulated for a single regular function, the monodromy conjecture predicts a relation among the poles of the motivic zeta function, the eigenvalues of local Milnor monodromy, and the roots of Bernstein-Sato polynomials. Having introduced Bernstein-Sato polynomials in the preceding subsection, we now recall the relevant notions of Milnor monodromy and motivic zeta functions.

Let $f\in\Gamma(X,\mathcal O_X)$ be a nonconstant regular function. For each \(x\in f^{-1}(0)\), the analytic map defined by \(f\) has a local Milnor fiber at \(x\), whose cohomology carries a monodromy action. Let \(E(f)\) denote the set of eigenvalues occurring in the cohomology of the local Milnor fibers at points \(x\in V(f)\). These eigenvalues are closely related to the roots of the Bernstein-Sato polynomial of \(f\). Kashiwara \cite{Kashiwara_Bernstein_and_eigenvalue_of_monodromy} and Malgrange \cite{Malgrange_Bernstein_eigenvalue} proved that \(\mathrm{Exp}(V(b_f)):=\{e^{2\pi\sqrt{-1}s_0}\mid b_f(s_0)=0\}\) equals \(E(f)\).

The conjecture also concerns the motivic zeta function. The motivic zeta function is defined by motivic integration and can be computed from a log resolution, as follows. The theorem also applies to ideals; its original form is recovered by taking \(I=(f)\).

\begin{theorem}[\cite{Motivic_Igusa_Zeta_Function}]\label{theorem_denef_loeser}
	Let $X$ be a smooth complex algebraic variety, let $\mathfrak a\subseteq\mathcal O_X$ be a nonzero coherent ideal sheaf, and let
	$\mu:Y\longrightarrow X$ be a log resolution of $\mathfrak a$. Let \(E_i\) (\(i\in S\)) be the irreducible components appearing in either \(\operatorname{div}(\mu^*(\mathfrak a))\) or \(K_{Y/X}\), and write \(\operatorname{div}(\mu^*(\mathfrak a))=\sum_{i\in S}N_iE_i\) and \(K_{Y/X}=\sum_{i\in S}(\nu_i-1)E_i\). Then the motivic zeta function of $\mathfrak{a}$, denoted by $Z^{\mathrm{mot}}_{\mathfrak{a}}(s)$, is given by
	\begin{flalign*}
		Z^{\mathrm{mot}}_{\mathfrak{a}}(s)=\sum_{J \subset S}[\mathring{E}_{J}]\prod_{i \in J}\frac{\mathbb{L}-1}{\mathbb{L}^{N_{i}s+\nu_{i}}-1},
	\end{flalign*}
	where $\mathring{E}_{J}:=(\cap_{i \in J}E_{i})\setminus (\cup_{j \notin J}E_{j})$. We refer to the pairs \((N_i,\nu_i)\), \(i\in S\), as the numerical data of the log resolution.
\end{theorem}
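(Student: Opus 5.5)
The plan is to derive the formula from the change of variables theorem of motivic integration, with the normalization of \cite{Motivic_Igusa_Zeta_Function}. Let $\mathcal L(X)$ be the arc space of $X$ and $\mu_X$ its motivic measure, normalized so that $\mu_X(\mathcal L(X))=[X]$; the measure takes values in a completed localized Grothendieck ring in which $\mathbb L^{-1}$ is available. For an arc $\gamma$ write $\operatorname{ord}_{\mathfrak a}(\gamma)$ for the order of vanishing of $\mathfrak a$ along $\gamma$. The motivic zeta function is by definition
\[
Z^{\mathrm{mot}}_{\mathfrak a}(s)=\int_{\mathcal L(X)}\mathbb L^{-s\operatorname{ord}_{\mathfrak a}}\,d\mu_X ,
\]
understood as a power series in $T=\mathbb L^{-s}$ whose coefficients are the measures of the cylinder sets $\{\operatorname{ord}_{\mathfrak a}=k\}$. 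The arcs contained in $V(\mathfrak a)$ form a set of measure zero, so they can be ignored.

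\textbf{Step 1 (transfer to $Y$).} Apply the change of variables formula (Kontsevich, Denef--Loeser) to the proper birational map $\mu$. It states that for a measurable function $\alpha$ on $\mathcal L(X)$,
\[
\int_{\mathcal L(X)}\mathbb L^{-\alpha}\,d\mu_X=\int_{\mathcal L(Y)}\mathbb L^{-\alpha\circ\mu_*-\operatorname{ord}K_{Y/X}}\,d\mu_Y .
\]
Since $\operatorname{ord}_{\mathfrak a}\circ\mu_*=\operatorname{ord}_{\mu^*\mathfrak a}$, the integrand on $Y$ becomes $\mathbb L^{-s\operatorname{ord}_{\mu^*\mathfrak a}-\operatorname{ord}K_{Y/X}}$. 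This step is the main obstacle if it has to be proved rather than quoted. One needs two facts: $\mu_*$ is a bijection away from arcs lying in the exceptional locus, which has measure zero; and over arcs where $\operatorname{ord}K_{Y/X}=e$, the map on truncated jet spaces $\mathcal L_m(Y)\to\mathcal L_m(X)$ is, for $m\gg e$, a piecewise trivial fibration with fibres $\mathbb A^e$ onto its image. The second fact is Denef--Loeser's key lemma and is proved with Hensel-type arguments on the Jacobian. I would quote it.

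\textbf{Step 2 (stratify arcs on $Y$).} Decompose $\mathcal L(Y)$, up to measure zero, according to the stratum $\mathring E_J$ containing the origin $\gamma(0)$ and the orders $k_i=\operatorname{ord}_{E_i}\gamma\geq1$ for $i\in J$; arcs with $\gamma(0)\in\mathring E_J$ have order $0$ along every $E_j$ with $j\notin J$. Because $\mu$ is a log resolution, each point of $\mathring E_J$ has a Zariski neighbourhood with étale coordinates $y_1,\dots,y_n$ in which $E_i=\{y_i=0\}$ for $i\in J$. In these coordinates $\mu^*\mathfrak a$ is generated by $u\prod_{i\in J}y_i^{N_i}$ and the Jacobian equals $v\prod_{i\in J}y_i^{\nu_i-1}$ with $u,v$ units. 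On such a piece,
\[
\operatorname{ord}_{\mu^*\mathfrak a}=\sum_{i\in J}k_iN_i,\qquad \operatorname{ord}K_{Y/X}=\sum_{i\in J}k_i(\nu_i-1).
\]
For the stratum itself, $\mathcal L(Y)$ is locally trivial over $Y$, so the measure is computed fibrewise from coordinates. The condition $\operatorname{ord}y_i=k_i$ requires the first $k_i-1$ coefficients of $y_i$ to vanish and the $k_i$-th to be nonzero. The stratum therefore has measure $[\mathring E_J](\mathbb L-1)^{|J|}\mathbb L^{-\sum_{i\in J}k_i}$, where the normalization $\mu_Y(\mathcal L(Y))=[Y]$ accounts for the starting point. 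Additivity of classes in the Grothendieck ring lets us glue these local computations over a finite cover of $\mathring E_J$.

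\textbf{Step 3 (sum).} Summing over $J$ and over all $k_i\geq1$ gives a product of geometric series, one for each $i\in J$:
\[
\sum_{k\ge1}(\mathbb L-1)\mathbb L^{-k}\mathbb L^{-k(N_is+\nu_i-1)}=\frac{(\mathbb L-1)\mathbb L^{-(N_is+\nu_i)}}{1-\mathbb L^{-(N_is+\nu_i)}}=\frac{\mathbb L-1}{\mathbb L^{N_is+\nu_i}-1}.
\]
This is exactly the stated formula. The term $J=\emptyset$ contributes $[\mathring E_\emptyset]=[Y\setminus\bigcup_iE_i]$, which agrees with the contribution of arcs away from $V(\mathfrak a)$ and the exceptional locus. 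In the ring where these identities hold, the rational expressions are interpreted via $T=\mathbb L^{-s}$.
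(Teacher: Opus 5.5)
The paper gives no proof of this statement and simply quotes it from Denef--Loeser. Your argument is the standard one from that source, and it is correct: transfer the integral to $Y$ by the change of variables formula, split the arcs by the stratum $\mathring E_J$ containing $\gamma(0)$ and by the contact orders $k_i=\operatorname{ord}_{E_i}\gamma$, show each piece has measure $[\mathring E_J](\mathbb L-1)^{|J|}\mathbb L^{-\sum_{i\in J}k_i}$, and sum the geometric series. Your normalization $\mu_X(\mathcal L(X))=[X]$, with the integral taken over all arcs, is also exactly the convention under which the displayed formula holds with no extra power of $\mathbb L$ and with the $J=\emptyset$ term included.
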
 

The topological zeta function is obtained by specializing the motivic zeta function via the Euler characteristic, as follows.
\begin{definition}[Topological zeta function]
	As in the setting above, the topological zeta function of $\mathfrak{a}$ is given by
	\begin{flalign*}
		Z^{\mathrm{top}}_{\mathfrak{a}}(s):=\sum_{J \subset S}\chi(\mathring{E}_{J})\prod_{i \in J}\frac{1}{N_{i}s+\nu_{i}}.
	\end{flalign*}
\end{definition}

Now we can state the (weak) monodromy conjecture and the strong monodromy conjecture.

\begin{conjecture}[Monodromy Conjecture, \cite{Motivic_Igusa_Zeta_Function}]\label{mc}
	Let $X$ be a smooth complex algebraic variety and let $f\in\Gamma(X,\mathcal O_X)$ be a nonconstant regular function. If $s_0$ is a pole of $Z_{f}^{\mathrm{mot}}(s)$, then $e^{s_0\cdot 2\pi \sqrt{-1}} \in E(f)$.
\end{conjecture}

\begin{conjecture}[Strong Monodromy Conjecture]\label{smc}
	Let $X$ be a smooth complex algebraic variety and let $f\in\Gamma(X,\mathcal O_X)$ be a nonconstant regular function. If $s_0$ is a pole of $Z_{f}^{\mathrm{mot}}(s)$, then $s_{0}$ is a root of $b_{f}(s)$.
\end{conjecture}

The monodromy conjecture also has an ideal-theoretic version. In this version, one replaces \(f\) by \(\mathfrak a\) and \(Z_f^{\mathrm{mot}}\) by \(Z_{\mathfrak a}^{\mathrm{mot}}\). The Bernstein-Sato polynomial also has an ideal-theoretic version, denoted by \(b_{\mathfrak a}(s)\); see \cite{Bernstein_Sato_polynomial_of_arbitrary_varieties}. In the ideal setting, there is no direct analogue of the local Milnor monodromy eigenvalues. Verdier introduced Verdier monodromy in \cite{Verdier_new_monodromy}; in the hypersurface setting, it agrees with Milnor monodromy and extends naturally to ideals. This ideal-theoretic version of monodromy is compatible with the Bernstein-Sato polynomial of \(\mathfrak a\). Budur, Mustaţă, and Saito proved in \cite{Bernstein_Sato_polynomial_of_arbitrary_varieties} that \(\mathrm{Exp}(V(b_{\mathfrak a}(s))):=\{e^{2\pi\sqrt{-1}s_0}\mid b_{\mathfrak a}(s_0)=0\}\) is precisely the set of eigenvalues of Verdier monodromy.

	\section{Criteria for the roots of the Bernstein-Sato polynomial}\label{sec3}
	In this section, we will focus on the affine case and prove our main theorems. We will fix $X=\mathbb{C}^{n}$ and $\mathcal{D}=\mathbb{C}[x_{1},...,x_{n},\partial_{1},...,\partial_{n}]$. First we will prove a polynomial PDE criterion for the general roots of Bernstein-Sato polynomial, which is a generalization of Theorem A in \cite{CZ26}. The proof will use Fischer pairing, which we define as follows.
	
	\begin{definition}
		The Fischer pairing is a bilinear pairing on the polynomial ring such that
		\begin{flalign*}
			\langle \cdot, \cdot \rangle: \mathbb{C}[x_{1},...,x_{n}] \times \mathbb{C}[x_{1},...,x_{n}] &\rightarrow \mathbb{C}  \\
			(f(x),g(x)) &\mapsto f(\partial) \cdot g(x)|_{x=0},
		\end{flalign*}
		where $f(\partial)$ denotes the differential operator obtained by replacing every $x_{i}$ with the corresponding $\partial_{i}$. 
	\end{definition}
	Since $\mathbb{C}[x_{1},...,x_{n}]$ is a $\mathbb{C}$-vector space generated by all monomials, it suffices to see how this pairing acts on monomials. By definition, we have
	\begin{flalign*}
		\langle x^{\alpha},x^{\beta} \rangle=\begin{cases}
			\alpha!   \ \mathrm{if} \ \alpha=\beta,   \\
			0   \  \mathrm{if} \ \alpha \neq \beta,
		\end{cases}
	\end{flalign*}
	where $\alpha=(\alpha_{1},...,\alpha_{n}),\beta=(\beta_{1},...,\beta_{n})$ are vectors, $x^{\alpha}=x_{1}^{\alpha_{1}}...x_{n}^{\alpha_{n}}$ and $\alpha !=\alpha_{1}!\alpha_{2}!...\alpha_{n}!$.
	
	We give some properties of the Fischer pairing within the following lemma. They can be proved easily using the characterization of the pairing above.
	
	\begin{lemma}\label{Fischer_pairing}
		The Fischer pairing has the following properties.
		
		$(1)$ This pairing is commutative, $\mathbb{C}$-bilinear and non-degenerate.
		
		$(2)$ The adjoint of some polynomial can be translated into the action of a differential operator, that is, for $f,g,h \in \mathbb{C}[x_{1},...,x_{n}]$, we have $\langle h(x)f(x),g(x) \rangle = \langle f(x), h(\partial) \cdot g(x)\rangle$.
		
	\end{lemma}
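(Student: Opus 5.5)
The plan is to reduce everything to monomials, since both sides of each claimed identity are $\mathbb{C}$-bilinear in their arguments. The characterization $\langle x^{\alpha},x^{\beta}\rangle=\alpha!\,\delta_{\alpha\beta}$ is immediate from the definition. Indeed, $\partial^{\alpha}x^{\beta}$ is a constant multiple of $x^{\beta-\alpha}$ when $\beta\ge\alpha$ componentwise, and vanishes otherwise. Evaluating at $x=0$ kills every term except the one with $\beta=\alpha$, which leaves $\alpha!$.

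For (1), bilinearity holds because $f\mapsto f(\partial)$ is linear and both applying an operator and evaluating at $0$ are linear. Commutativity follows from the symmetry of $\alpha!\,\delta_{\alpha\beta}$ on the monomial basis. For non-degeneracy, take a nonzero $g=\sum c_\beta x^\beta$ and pick $\beta$ with $c_\beta\neq0$. Then $\langle x^\beta,g\rangle=\beta!\,c_\beta\neq0$. By symmetry the same argument handles the first slot. In other words, the monomials form an orthogonal basis with nonzero diagonal entries.

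For (2), the key identity is $(hf)(\partial)=h(\partial)f(\partial)$, which holds because $\partial_1,\ldots,\partial_n$ commute and so $p\mapsto p(\partial)$ is a ring homomorphism. Hence
\[
\langle hf,g\rangle=f(\partial)\bigl(h(\partial)g\bigr)\big|_{x=0}=\langle f,h(\partial)g\rangle .
\]
There is no real obstacle here. The only point needing care is the commutativity of the constant-coefficient operators, which gives $h(\partial)f(\partial)=f(\partial)h(\partial)$.
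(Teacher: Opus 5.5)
Your proof is correct and follows the route the paper indicates: you reduce (1) to the monomial characterization $\langle x^{\alpha},x^{\beta}\rangle=\alpha!\,\delta_{\alpha\beta}$, which the paper only asserts is an easy check. For (2) you argue directly from the definition, using that $p\mapsto p(\partial)$ is a ring homomorphism into the commutative algebra $\mathbb{C}[\partial_1,\ldots,\partial_n]$; this is the cleanest way to do it.
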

	\begin{theorem}\label{pde_thm}
		Let $I=(f_{1},...,f_{r}) \subset \mathbb{C}[x_{1},...,x_{n}]$ be an ideal with $f_{i}(0)=0$. If there exist $P_{\alpha} \in \mathbb{C}[x_{1},...,x_{n}]$ for $\alpha=(\alpha_{1},...,\alpha_{r}) \in \mathbb{N}^{r}$ and $\rho \in \mathbb{Q}_{>0}$ satisfying the following conditions.
		
		$(1)$ $P_{0} \neq 0$;
		
		$(2)$ $x_{i}P_{\alpha}=\sum_{j=1}^{r}\partial_{i}f_{j}(\partial) \cdot P_{\alpha+e_{j}}$;
		
		$(3)$ $\sum_{j=1}^{r}f_{j}(\partial) \cdot P_{\alpha+e_{j}}=(\rho+|\alpha|)P_{\alpha}$;
		
		$(4)$ For any $\beta=(\beta_{1},...,\beta_{r}) \in \mathbb{N}^{r}$ such that $|\beta|=|\alpha|+1$, we have $f_{1}^{\beta_{1}}...f_{r}^{\beta_{r}}(\partial) \cdot P_{\alpha}=0$.
		
		Then $b_{I}(-\rho)=0$.
	\end{theorem}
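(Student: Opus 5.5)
The plan is to argue by contradiction. Suppose $b_I(-\rho)\neq 0$. We will construct a $\mathbb{C}$-linear functional on the graph module $\mathcal{D}_Y f^s$ (with $f^s:=f_1^{s_1}\cdots f_r^{s_r}$) that is nonzero on $f^s$, is an eigenvector for $s$ with eigenvalue $-\rho$, and kills the submodule appearing in the right-hand side of the defining relation. Equivalently, it should kill $\sum_k \mathcal{D}[s_i,s_{ij}]f_k f^s$ together with the binomial terms. Since $b_I(s)f^s$ lies in that submodule, applying the functional gives $b_I(-\rho)\,\phi(f^s)=0$. This forces $b_I(-\rho)=0$.

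By Remark \ref{uniqueness}, every element of $\mathcal{D}_Yf^s$ is uniquely of the form $\sum_\alpha h_\alpha(x)\partial_t^\alpha f^s$. We define
\[\phi\Big(\sum_\alpha h_\alpha\partial_t^\alpha f^s\Big)=\sum_\alpha c_\alpha\langle h_\alpha,P_\alpha\rangle\]
using the Fischer pairing, with suitable normalizing constants $c_\alpha$ (signs, and possibly factorials from $(s_j)_{\alpha_j}$). This is well defined by the uniqueness in Remark \ref{uniqueness}. Moreover, $\phi(f^s)=\langle 1,P_0\rangle$ after we normalize. Here we may need to replace the pairing with $1$ by pairing against a suitable polynomial, or shift the base point. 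The cleaner option is to consider the functional $\phi_g(Q f^s)=\phi(gQf^s)$ for a $g$ with $\langle g,P_0\rangle\ne0$, which exists by the non-degeneracy in Lemma \ref{Fischer_pairing}(1).

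The core is to check that $\phi$ is compatible with the generators of $\mathcal{D}_Y$: multiplication by $x_i$, and $\partial_{x_i}$, $t_j$, $\partial_{t_j}$. By Lemma \ref{Fischer_pairing}(2), multiplication by $h$ is adjoint to $h(\partial)$. The action of $\partial_{x_i}$ on $h\partial_t^\alpha f^s$ gives
\[(\partial_i h)\partial_t^\alpha f^s-\sum_j h\,\partial_if_j\,\partial_t^{\alpha+e_j}f^s.\]
Pairing this against $P$ and using adjointness ($\partial_i$ is adjoint to $x_i$) shows that $\phi$ is annihilated by the appropriate right action exactly because of condition (2). Likewise, $s=-\sum_j\partial_{t_j}t_j$ acts on $h\partial_t^\alpha f^s$ via the relation $t_j\partial_t^\alpha f^s=f_j\partial_t^\alpha f^s-\alpha_j\partial_t^{\alpha-e_j}f^s$. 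Condition (3) then says precisely that $\phi(sm)=-\rho\,\phi(m)$ for all $m$; the $|\alpha|$ shift comes from the commutation terms. Condition (4) should give that $\phi$ vanishes on elements $h\prod f_j^{\beta_j}\partial_t^{\alpha}f^s$ with $|\beta|>|\alpha|$. These are the elements $\mathcal{D}_U[s]\prod_{u_i<0}\binom{s_i}{-u_i}f^{s+u}$ with $|u|=1$, rewritten in the $\partial_t^\alpha f^s$ basis via $\binom{s_i}{k}f_i^{-k}\propto\partial_{t_i}^k$. Hence the submodule in (\ref{def_b_function_ideal}) lies in the subspace generated from $\{f^\beta\partial_t^\alpha f^s:|\beta|=|\alpha|+1\}$ by the $\mathcal{D}[s]$-action.

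The main obstacle is this last step. We must show that $\ker\phi$, or more precisely the largest $\mathcal{D}_U[s_1,\dots,s_r]$-stable subspace contained in it, contains the whole right-hand side of (\ref{def_b_function_ideal}). Handling $s_j$ individually, rather than only $s$, is part of this. So is controlling how $\partial_{x_i}$ and $s_j$ move elements of the form $f^\beta\partial_t^\alpha f^s$. I would attempt it by showing that the set
\[\{m:\phi(Qm)=0\ \ \forall Q\in\mathcal{D}_U[s_i,s_{ij}]\}\]
is a submodule containing $f_kf^s$. For this I would verify, via conditions (2)–(4) and an induction on $|\alpha|$, that the ``degree'' $|\beta|-|\alpha|\ge1$ is preserved by the generators, and that $\phi$ vanishes in positive degree. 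Once that submodule statement holds, the contradiction argument of the first paragraph finishes the proof.
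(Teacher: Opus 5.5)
Your architecture matches the paper's: the functional $h\,\partial_t^\alpha f^s\mapsto\langle h,P_\alpha\rangle$, the identity $\phi(\partial_{x_i}m)=0$ from (2), the eigenvalue relation $\phi(sm)=-\rho\,\phi(m)$ from (3), vanishing from (4) on the span $S$ of the elements $h\,f^\beta\partial_t^\alpha f^s$ with $|\beta|\ge|\alpha|+1$, and an auxiliary $g$ with $\langle g,P_0\rangle\neq0$. Take all $c_\alpha=1$: for (2) to give the cancellation you need $c_{\alpha+e_j}=c_\alpha$.

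However, the step you flag as the main obstacle is left unproved, and the route you sketch for it fails. $S$ is stable under $x_i$, $s_j$ and $s_{ij}$, but not under $\partial_{x_i}$, because
\[
\partial_{x_i}\bigl(f^\beta\partial_t^\alpha f^s\bigr)
=\sum_k\beta_k f^{\beta-e_k}(\partial_if_k)\,\partial_t^\alpha f^s
-\sum_j f^\beta(\partial_if_j)\,\partial_t^{\alpha+e_j}f^s ,
\]
and every term has $|\beta|-|\alpha|$ lowered by one. So ``the degree $|\beta|-|\alpha|\ge1$ is preserved by the generators'' is false, and $S$ is not the submodule you need.

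The missing idea is to handle $\partial_{x_i}$ through the vanishing $\phi\circ\partial_{x_i}=0$ you already derived, not through preservation. The paper writes every term of $g\,Q$, with $Q\in\mathcal{D}[s_i,s_{ij}]$, as a normally ordered monomial $\partial_x^\gamma x^\beta\partial_t^\theta t^\alpha$ with $|\theta|=|\alpha|$. This is possible because each $s_{ij}=-\partial_{t_i}t_j$ has $t$-weight zero. Terms with $\gamma\neq0$ are killed outright by $\phi$. For $\gamma=0$ one has $x^\beta\partial_t^\theta t^\alpha f_jf^s=x^\beta f^\alpha f_j\,\partial_t^\theta f^s$. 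Its value under $\phi$ is $\langle (f^\alpha f_j)(\partial)P_\theta,x^\beta\rangle$, which vanishes by (4) since $|\alpha+e_j|=|\theta|+1$.

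In your own language, the stable subspace to use is $\sum_\gamma\partial_x^\gamma S$ rather than $S$. It is a $\mathcal{D}[s_i,s_{ij}]$-submodule containing $f_kf^s$; for stability under $x_j$, use $x_j\partial_x^\gamma=\partial_x^\gamma x_j-\gamma_j\partial_x^{\gamma-e_j}$. It also lies in $S+\sum_i\partial_{x_i}\mathcal{D}_Yf^s\subseteq\ker\phi$. With this, $0=\phi(g\,b_I(s)f^s)=b_I(-\rho)\langle g,P_0\rangle$, and neither the contradiction framing nor an induction on $|\alpha|$ is needed.
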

	\begin{proof}
		Consider the module $\mathcal{M}=\mathcal{D}_{x,t}f_{1}^{s_{1}}...f_{r}^{s_{r}}$. Its element can be written as the linear combination of the form $h\partial_{t}^{\alpha}f^{s}$, where $h \in \mathbb{C}[x_{1},...,x_{n}]$. We define a linear map
		\begin{flalign*}
		\Lambda:\mathcal{M} &\rightarrow \mathbb{C}  \\
		h \partial_{t}^{\alpha}f^{s} &\mapsto \langle P_{\alpha}, h \rangle.	
		\end{flalign*}
		By Remark \ref{uniqueness}, this $\Lambda$ is well-defined.
		For any $h\partial_{t}^{\alpha}f^{s} \in \mathcal{M}$ and any $1 \le i \le n$, we can calculate $\Lambda(\partial_{i}(h\partial_{t}^{\alpha}f^{s}))$ as follows.
		\begin{flalign*}
			&\Lambda(\partial_{i}(h\partial_{t}^{\alpha}f^{s}))  \\
		   =&\Lambda(\partial_{i}h\partial_{t}^{\alpha}f^{s})-\Lambda(h\sum_{j=1}^{r}\partial_{i}f_{j}\partial_{t_{j}}(\partial_{t}^{\alpha}f^{s}))  \\
		   =&\langle P_{\alpha}, \partial_{i}h \rangle-\sum_{j=1}^{r}\langle P_{\alpha+e_{j}}, h\partial_{i}f_{j} \rangle \\
		   =&\langle  x_{i}P_{\alpha}-\sum_{j=1}^{r}\partial_{i}f_{j}(\partial) \cdot P_{\alpha+e_{j}},h \rangle \\
		   =&0.
		\end{flalign*}
		This implies that for any $m \in \mathcal{M}$, we have 
		\begin{align}\label{equation_1}
			\Lambda(\partial_{i}m)=0
		\end{align}
		
		Now we come to the identity of b function, that is, there exist $P_{j} \in \mathcal{D}[s_{i},s_{ij}]$ such that 
		\begin{flalign*}
			b_{I}(s)f^{s}=\sum_{j=1}^{r}P_{j}f_{j}f^{s}.
		\end{flalign*}
		Since $P_{0} \neq 0$, one can find a polynomial $h_{0} \neq 0$ such that $\langle P_{0},h_{0} \rangle \neq 0$. We will prove that $\Lambda(\sum_{j=1}^{r}h_{0}P_{j}f_{j}f^{s})=0$. To do this, we write every term of $h_{0}P_{j}$ as $\partial_{x}^{\gamma}x^{\beta}\partial_{t}^{\theta}t^{\alpha}$ such that $|\theta|=|\alpha|$. Using equation \ref{equation_1}, we only need to consider the term with $\gamma=0$. Now we have
		\begin{flalign*}
			&\Lambda(x^{\beta}\partial_{t}^{\theta}t^{\alpha}f_{j}f^{s}) \\
			=&\Lambda(\partial_{t}^{\theta}x^{\beta}f^{\alpha}f_{j}f^{s}) \\
			=&\langle P_{\theta}, x^{\beta}f^{\alpha}f_{j} \rangle \\
			=&\langle f^{\alpha}f_{j}(\partial) \cdot P_{\theta},x^{\beta} \rangle  \\
			=&0.
		\end{flalign*}
		This tells us $\Lambda(b_{I}(s)h_{0}f^{s})=0$. On the other hand, for any $h\partial_{t}^{\alpha}f^{s} \in \mathcal{M}$, we have
		\begin{flalign*}
			&\Lambda(h\partial_{t}^{\alpha}sf^{s}) \\
			=&\Lambda(-\sum_{j=1}^{r}h\partial_{t}^{\alpha}\partial_{t_{j}}t_{j}f^{s}) \\
			=&-\sum_{j=1}^{r}\Lambda(hf_{j}\partial_{t}^{\alpha+e_{j}}f^{s}) \\
			=&-\sum_{j=1}^{r}\langle P_{\alpha+e_{j}},hf_{j} \rangle \\
			=&-\langle \sum_{j=1}^{r}f_{j}(\partial) \cdot P_{\alpha+e_{j}},h \rangle  \\
			=&-\langle (\rho+|\alpha|)P_{\alpha},h \rangle \\
			=&-(\rho+|\alpha|)\Lambda(h\partial_{t}^{\alpha}f^{s}).
		\end{flalign*}
		Meanwhile, 
		\begin{flalign*}
			\partial_{t}^{\alpha}s=s\partial_{t}^{\alpha}-|\alpha|\partial_{t}^{\alpha}.
		\end{flalign*}
		This implies that $\Lambda(sm)=-\rho\Lambda(m)$ for any $m \in \mathcal{M}$. Applying $\Lambda$ to the equation $b_{I}(s)h_{0}f^{s}=\sum_{j=1}^{r}h_{0}P_{j}f_{j}f^{s}$ we can get
		\begin{flalign*}
			0=&\Lambda(b_{I}(s)h_{0}f^{s})  \\
			 =&b_{I}(-\rho)\langle P_{0},h_{0} \rangle.
		\end{flalign*}
		We have $b_{I}(-\rho)=0$.
	\end{proof}

	In the homogeneous and weighted homogeneous cases, we have the following corollaries.
	\begin{corollary}
	Let $I=(f_{1},...,f_{r}) \subset \mathbb{C}[x_{1},...,x_{n}]$ be an ideal generated by weighted homogeneous polynomials $f_{i}$ with common weights. We assume $\mathrm{wt}(x_{i})=w_{i} \in \mathbb{N}$ and $\mathrm{wt}(f_{i})=d$. Suppose there exist $P_{\alpha} \in \mathbb{C}[x_{1},...,x_{n}]$ indexed by $\alpha \in \mathbb{N}^{r}$ which are weighted homogeneous polynomials with weight $d|\alpha|+q$ for some integer $0 \le q<d$ satisfying the following conditions.
	
	$(1)$ $P_{0} \neq 0$;
	
	$(2)$  $x_{i}P_{\alpha}=\sum_{j=1}^{r}\partial_{i}f_{j}(\partial) \cdot P_{\alpha+e_{j}}$,
	
	where $e_{j}=(0,...,0,1,0,...,0)$ is the $j$-th unit vector. Then $b_{I}(-\frac{q+\sum_{i=1}^{n}w_{i}}{d})=0$.
	\end{corollary}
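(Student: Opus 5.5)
The plan is to deduce the corollary from Theorem \ref{pde_thm} with $\rho=\frac{q+\sum_i w_i}{d}$. Condition (1) is assumed, and condition (2) is exactly hypothesis (2). So I only need to derive conditions (3) and (4) from weighted homogeneity together with (2).

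\textbf{Condition (4).} This is a pure degree count. If $|\beta|=|\alpha|+1$, then $f^\beta(\partial)$ lowers weighted degree by $d|\beta|=d|\alpha|+d$. But $P_\alpha$ has weight $d|\alpha|+q<d|\alpha|+d$, so $f^\beta(\partial)\cdot P_\alpha=0$. Here I use that every monomial has nonnegative weight, and I will assume $w_i\ge1$ so that weights behave well.

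\textbf{Condition (3).} I will use the weighted Euler identity $\sum_i w_ix_i\partial_i f_j=d f_j$. Passing to operators, where products of constant-coefficient operators commute, this gives
\[
d\,f_j(\partial)=\sum_i w_i\,\partial_i\,(\partial_if_j)(\partial).
\]
Apply this to $P_{\alpha+e_j}$, sum over $j$, and use hypothesis (2):
\[
d\sum_j f_j(\partial)P_{\alpha+e_j}=\sum_i w_i\partial_i\Bigl(\sum_j(\partial_if_j)(\partial)P_{\alpha+e_j}\Bigr)=\sum_i w_i\partial_i(x_iP_\alpha).
\]
Next expand $\partial_i(x_iP_\alpha)=P_\alpha+x_i\partial_iP_\alpha$. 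By the weighted Euler identity for $P_\alpha$, which has weight $d|\alpha|+q$, the right-hand side equals
\[
\Bigl(\sum_i w_i+d|\alpha|+q\Bigr)P_\alpha.
\]
Dividing by $d$ gives $\sum_j f_j(\partial)P_{\alpha+e_j}=(\rho+|\alpha|)P_\alpha$, which is condition (3).

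\textbf{Remaining checks.} The constant $\rho$ lies in $\mathbb Q_{>0}$ because $\sum_i w_i>0$. The hypothesis $f_i(0)=0$ holds because $d\ge1$ and the $f_i$ are weighted homogeneous of positive weight. Theorem \ref{pde_thm} then yields $b_I(-\rho)=0$.

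The only delicate point is justifying the operator form of the Euler identity. The substitution $x\mapsto\partial$ is a ring homomorphism into constant-coefficient operators, which form a commutative ring, so the identity $\sum_i w_i x_i(\partial_i f_j)=df_j$ transfers directly. The rest is bookkeeping.
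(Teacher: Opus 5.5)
Your proposal is correct and matches the paper's proof: condition (3) comes from the weighted Euler identity transferred to constant-coefficient operators and combined with hypothesis (2), and condition (4) from the weight count $d|\alpha|+q-d|\beta|=q-d<0$. The extra assumption $w_i\ge1$ is unnecessary, since nonnegative weights already rule out nonzero polynomials of negative weight; your checks that $\rho>0$ and $f_i(0)=0$ supply details the paper leaves implicit.
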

	\begin{proof}
	It suffices to verify the conditions $(3)$ and $(4)$ in Theorem \ref{pde_thm}. Since $f_{j}$ are weighted homogeneous with weight $d$, we have the Euler identity
	\begin{flalign*}
		\sum_{i=1}^{n}w_{i}x_{i}\partial_{i}f_{j}=d \cdot f_{j}.
	\end{flalign*}
	By replacing every $x_{i}$ with the corresponding $\partial_{i}$, taking it into the condition $(3)$ in Theorem \ref{pde_thm} and using condition $(2)$, one can get
	\begin{flalign*}
		 &\sum_{j=1}^{r}f_{j}(\partial) \cdot P_{\alpha+e_{j}}  \\
		=&d^{-1}\sum_{j=1}^{r}\sum_{i=1}^{n}w_{i}\partial_{i} \cdot \partial_{i}f_{j}(\partial) \cdot P_{\alpha+e_{j}}  \\
		=&d^{-1}\sum_{i=1}^{n}w_{i}\partial_{i}x_{i}P_{\alpha}  \\
		=&d^{-1}\sum_{i=1}^{n}w_{i}P_{\alpha}+d^{-1}\sum_{i=1}^{n}w_{i}x_{i}\partial_{i}P_{\alpha}  \\
		=&(|\alpha|+\frac{q+\sum_{i=1}^{n}w_{i}}{d})P_{\alpha}.
	\end{flalign*}
	Thus the condition $(3)$ in Theorem \ref{pde_thm} holds for $\rho=\frac{q+\sum_{i=1}^{n}w_{i}}{d}$. For condition (4), give each $\partial_i$ weight $-w_i$. Since each
	$f_j(\partial)$ is weighted homogeneous of weight $-d$, the operator $\prod_{j=1}^r f_j(\partial)^{\beta_j}$ lowers the weight by $d|\beta|$. If $|\beta|=|\alpha|+1$, then any nonzero result of applying this operator to $P_\alpha$ would have weight $d|\alpha|+q-d|\beta|=q-d<0$.
	The polynomial ring has no nonzero elements of negative weight, so $\prod_{j=1}^r f_j(\partial)^{\beta_j}P_\alpha=0$. Hence condition (4) holds. In conclusion, Theorem \ref{pde_thm} implies $b_I(-\rho)=0$, as claimed. 
	\end{proof}
	
	\begin{corollary}\label{homogeneous_pde}
		Let $I=(f_{1},...,f_{r}) \subset \mathbb{C}[x_{1},...,x_{n}]$ be an ideal generated by homogeneous polynomials $f_{i}$ of degree $d$.  Suppose there exist $P_{\alpha} \in \mathbb{C}[x_{1},...,x_{n}]$ indexed by $\alpha \in \mathbb{N}^{r}$ which are homogeneous polynomials with degree $d|\alpha|+q$ for some integer $0 \le q<d$ satisfying the following conditions.
		
		$(1)$ $P_{0} \neq 0$;
		
		$(2)$  $x_{i}P_{\alpha}=\sum_{j=1}^{r}\partial_{i}f_{j}(\partial) \cdot P_{\alpha+e_{j}}$,
		
		where $e_{j}=(0,...,1,...,0)$ is the $j$-th unit vector. Then $b_{I}(-\frac{q+n}{d})=0$.
	\end{corollary}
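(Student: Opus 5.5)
This is the special case of the preceding weighted homogeneous corollary with all weights $w_i=1$, so the plan is simply to specialize it. We may also argue directly from Theorem \ref{pde_thm}, verifying conditions (3) and (4) and setting $\rho=\frac{q+n}{d}$; we sketch the direct route, since it costs nothing.

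\textbf{Condition (3).} Start from the Euler identity $\sum_i x_i\partial_i f_j=d f_j$ for each homogeneous generator. Replacing $x_i$ by $\partial_i$ gives the operator identity $f_j(\partial)=d^{-1}\sum_i \partial_i\,(\partial_i f_j)(\partial)$; these are constant-coefficient operators, so they commute. Summing over $j$ and using hypothesis (2) then yields
\[
\sum_j f_j(\partial)P_{\alpha+e_j}=d^{-1}\sum_i \partial_i(x_iP_\alpha)=d^{-1}\bigl(n+\deg P_\alpha\bigr)P_\alpha .
\]
The last equality is Euler's formula applied to the homogeneous polynomial $P_\alpha$. Since $\deg P_\alpha=d|\alpha|+q$, the right side equals $\bigl(|\alpha|+\frac{q+n}{d}\bigr)P_\alpha$, which is condition (3) with $\rho=\frac{q+n}{d}\in\mathbb{Q}_{>0}$.

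\textbf{Condition (4).} This is a degree count. The operator $f^\beta(\partial)$ is homogeneous of order $d|\beta|$ and lowers degree by that amount. When $|\beta|=|\alpha|+1$ it would send $P_\alpha$ to degree $q-d<0$, so the result is zero.

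\textbf{Remaining hypotheses.} We still need $f_j(0)=0$. Here $d\ge1$ is implicit: for $d=0$ there is no integer $q$ with $0\le q<d$, so the statement is vacuous. Condition (1) is assumed. Theorem \ref{pde_thm} therefore gives $b_I(-\frac{q+n}{d})=0$.

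No step is a real obstacle. The only care needed is checking that substituting $\partial$ for $x$ in the Euler identity is legitimate, which holds because polynomial substitution into commuting operators is a ring homomorphism.
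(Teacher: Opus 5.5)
Your proposal is correct and matches the paper's approach. The paper states this corollary without a separate proof as the $w_i=1$ case of the preceding weighted-homogeneous corollary, and that corollary's proof is exactly your direct route: verify conditions (3) and (4) of Theorem \ref{pde_thm} using the Euler identity with $x\mapsto\partial$, then a degree count.
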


	\begin{theorem}\label{rank_thm}
	Suppose $I=(f_{1},...,f_{r}) \subset \mathbb{C}[x_{1},...,x_{n}]$ is an ideal generated by homogeneous polynomials of degree $d$. If there exists $a \in \mathbb{C}^{n}$ such that for $F:=(f_{1},...,f_{r}):\mathbb{C}^{n} \rightarrow \mathbb{C}^{r}$, the rank of $dF(a)$ is $n$. Then $b_{I}(-\frac{n+q}{d})=0$ for $0 \le q <d$. 
	\end{theorem}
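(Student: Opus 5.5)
The plan is to deduce the statement from Corollary \ref{homogeneous_pde}. I will build the polynomials $P_\alpha$ as the Fischer duals of an explicit linear functional on $\mathcal M=\mathcal D_{x,t}f^s$, mirroring in reverse the proof of Theorem \ref{pde_thm}. The functional will be a localized ``delta function along the fibres of $F$'' near the point $a$, cut down to a single homogeneous degree.

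\emph{Step 1 (reduction to a local diffeomorphism).} The condition $\operatorname{rank}dF(a)=n$ is open, so I may move $a$ slightly and assume $a_1\neq 0$ (in particular $a\neq 0$). After renumbering the generators, the Jacobian of $F'=(f_1,\dots,f_n)$ is invertible at $a$. I set $P_\alpha=0$ whenever $\alpha$ has a nonzero entry beyond the first $n$ coordinates. Then condition (2) of Corollary \ref{homogeneous_pde} only involves $f_1,\dots,f_n$, so I am reduced to the case $r=n$ with $F$ a local biholomorphism near $a$. Let $G$ be the local inverse near $b=F(a)$, and let $J=\det dF$.

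\emph{Step 2 (the functional).} For $h\in\mathbb C[x]$ and $\alpha\in\mathbb N^n$, I define
\[
L(h\,\partial_t^\alpha f^s):=(-1)^{|\alpha|}\,\partial_t^{\alpha}\Big(\tfrac{h}{J}\circ G\Big)(b).
\]
This is formally $\int h(x)\,\partial_t^\alpha\delta(t-F(x))\,dx$ evaluated at $t=b$, and it is well defined by Remark \ref{uniqueness}. By the change of variables $x=G(t)$, or directly by the chain rule, I expect $L(\partial_i m)=0$ for all $m\in\mathcal M$. Concretely, this identity should read
\[
L(\partial_i h\,\partial_t^\alpha f^s)=\sum_j L(h\,\partial_if_j\,\partial_t^{\alpha+e_j}f^s).
\]
I then define $P_\alpha$ as the unique homogeneous polynomial of degree $n|\alpha|$ replaced by $d|\alpha|+q$ such that
\[
\langle P_\alpha,h\rangle=L(h\,\partial_t^\alpha f^s)\quad\text{for all homogeneous } h \text{ of degree } d|\alpha|+q,
\]
with the pairing zero against the other degrees. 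This is possible by nondegeneracy in Lemma \ref{Fischer_pairing}(1).

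\emph{Step 3 (verifying condition (2)).} By Lemma \ref{Fischer_pairing}(2), condition (2) is equivalent to the displayed identity of Step 2, tested only on homogeneous $h$ of degree $d|\alpha|+q+1$. Both sides then involve only the correct degrees. The left side has $\partial_ih$ of degree $d|\alpha|+q$ against $\partial_t^\alpha$. Each term on the right has $h\,\partial_if_j$ of degree $d|\alpha+e_j|+q$ against $\partial_t^{\alpha+e_j}$. So the degree truncation is compatible with the identity, because $\partial_i(h\,\partial_t^\alpha f^s)$ preserves the weight $\deg h-d|\alpha|$. This degree bookkeeping is the main obstacle. I expect it to work exactly because of homogeneity, and it is where the range $0\le q<d$ enters: it guarantees that $q$ indexes a genuine residue class of weights.

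\emph{Step 4 (nonvanishing of $P_0$).} I check that $P_0\neq0$ using $\langle P_0,x_1^q\rangle=L(x_1^qf^s)=a_1^q/J(a)\neq0$. Corollary \ref{homogeneous_pde} then gives $b_I(-\frac{n+q}{d})=0$. Note that the eigenvalue condition (3) of Theorem \ref{pde_thm} need not be checked by hand, since the Corollary derives it from homogeneity via the Euler identity.
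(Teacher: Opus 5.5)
Your strategy is the paper's own: the local inverse $G$ of $(f_1,\dots,f_n)$, the functionals $h\mapsto\partial_t^\alpha\big((h/J)\circ G\big)(b)$, truncation to degree $d|\alpha|+q$, Fischer duality, Corollary~\ref{homogeneous_pde}, and padding by $P_\alpha=0$. However, Step~2 fails as written because of the factor $(-1)^{|\alpha|}$.

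The identity that actually holds is the change-of-variables (divergence) formula
\[
\Big(\tfrac{\partial_i h}{J}\Big)\circ G=\sum_{j}\partial_{t_j}\Big(\big(\tfrac{h\,\partial_i f_j}{J}\big)\circ G\Big),
\]
with a plus sign. Applying $\partial_t^\alpha$ and evaluating at $b$, your $L$ satisfies $L(\partial_i h\,\partial_t^\alpha f^s)=-\sum_j L(h\,\partial_i f_j\,\partial_t^{\alpha+e_j}f^s)$. This is the opposite of what you need, so $L(\partial_i m)\neq0$ in general.

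Concretely, take $n=r=1$, $f=x^2$, $a=b=1$, $G(t)=\sqrt t$, $h=x$, $\alpha=0$. The left side is $L(f^s)=\tfrac12$, while the right side is $L(2x^2\partial_tf^s)=-\partial_t\sqrt t\,|_{t=1}=-\tfrac12$. Your own heuristic $\int h(x)\,\partial_t^\alpha\delta(t-F(x))\,dx=\partial_t^\alpha\big((h/J)\circ G\big)(t)$ carries no sign either. With your $L$ you would obtain $x_iP_\alpha=-\sum_j\partial_if_j(\partial)P_{\alpha+e_j}$, which is not condition (2). The repair is to drop $(-1)^{|\alpha|}$, or equivalently to replace $P_\alpha$ by $(-1)^{|\alpha|}P_\alpha$. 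After that your argument coincides with the paper's.

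This identity is the only genuinely analytic input, and you only say you ``expect'' it. The paper proves it with the vector field $W=h\,\partial_{x_i}$. In the $t$-coordinates one has $W=\sum_j (h\,\partial_if_j)\circ G\,\partial_{t_j}$ and $dx=(J\circ G)^{-1}dt$, and one compares both sides of $d(\iota_W\,dx)=\operatorname{div}(W)\,dx$. A direct chain-rule proof also works, but it needs symmetry of mixed partials and Jacobi's formula $\partial_iJ=J\operatorname{tr}\big((dF)^{-1}\partial_i dF\big)$ to cancel the extra terms.

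The restriction $0\le q<d$ plays no role in your degree bookkeeping in Step~3, which works for every $q\ge0$. It enters only inside Corollary~\ref{homogeneous_pde}, where it makes condition (4) automatic because the relevant weight $q-d$ is negative. Your perturbation of $a$ to get $a_1\neq0$ in Step~4 is a harmless simplification of the paper's case split ($q=0$ versus $q>0$, where $d\ge2$ forces $a\neq0$).
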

	\begin{proof}
	First we prove the case when $r=n$. Since the Jacobian matrix of $F$ at $a$ is invertible, by holomorphic inverse function theorem, there exists an open neighborhood $U$ of $a$ such that $F|_{U}:U \rightarrow V:=F(U)$ is biholomorphic. Let $\phi:V \rightarrow U$ be its inverse. We denote the coordinate on $U$ by $x$ and on $V$ by $v$ and we define $D(x):=\mathrm{det}(dF(x))$. We prove the following identity which can be obtained by changing variables for the divergence formula.
		\begin{flalign*}
			\frac{\partial_{i}h(\phi(v))}{D(\phi(v))}=\sum_{j}\partial_{v_{j}}\frac{\partial_{i}f_{j}(\phi(v))h(\phi(v))}{D(\phi(v))}.
		\end{flalign*}
		More specifically, let $W=h(x)\partial_{x_{i}}$ be a vector field. The divergence of $W$ with respect to $\omega:=dx_{1}\wedge...\wedge dx_{n}$ is $\partial_{i}h(x)$. When we use the coordinate $v$, we have $W=h(\phi(v))\sum_{j}\frac{\partial f_{j}}{\partial x_{i}} \partial_{v_{j}}$ and $\omega=(D(\phi(v)))^{-1}dv_{1} \wedge ...\wedge dv_{n}$. By definition of divergence, we have 
		\begin{flalign*}
			d(\iota_{W}\omega)=\mathrm{div}_{\omega}W \cdot \omega.
		\end{flalign*}
		Expanding this equation in the coordinate $v$, the right hand side is $\partial_{i}h(\phi(v))(D(\phi(v)))^{-1}dv_{1} \wedge ...\wedge dv_{n}$. After computing, the left hand side is $\sum_{j=1}^{n}\partial_{v_{j}}\frac{\partial_{i}f_{j}(\phi(v))h(\phi(v))}{D(\phi(v))}dv_{1} \wedge... \wedge dv_{n}$. Comparing two sides gives the required equation.
		
		Now we define $L_{\alpha}(h)=\partial_{v}^{\alpha}(\frac{h(\phi(v))}{D(\phi(v))})|_{v=F(a)}$. Then we have
		\begin{flalign*}
			L_{\alpha}(\partial_{i}h)=&\partial_{v}^{\alpha}(\frac{\partial_{i}h(\phi(v))}{D(\phi(v))})|_{v=F(a)}  \\
			=&\sum_{j}\partial_{v}^{\alpha+e_{j}}\frac{\partial_{i}f_{j}(\phi(v))h(\phi(v))}{D(\phi(v))}|_{v=F(a)}  \\
			=&\sum_{j}L_{\alpha+e_{j}}(\partial_{i}f_{j}h).
		\end{flalign*}
		Fix $q<d$, and we define $\lambda_{\alpha}(h):=L_{\alpha}(h_{d|\alpha|+q})$, where $h_{d|\alpha|+q}$ denotes the degree $d|\alpha|+q$ part of $h$. Since $f_{i}$ is homogeneous of degree $d$ for all $i$, the equation above also holds for $\lambda_{\alpha}$, that is,
		\begin{align}\label{equation_2}
		\lambda_{\alpha}(\partial_{i}h)=\sum_{j}\lambda_{\alpha+e_{j}}(\partial_{i}f_{j}h).
		\end{align}
		We take the dual of $\lambda_{\alpha}$ under Fischer pairing. More explicitly, we define $$P_{\alpha}:=\sum_{|\beta|=d|\alpha|+q}\frac{\lambda_{\alpha}(x^{\beta})}{\beta!}x^{\beta}.$$ Then $P_{\alpha}$ is homogeneous of degree $d|\alpha|+q$ and $\langle P_{\alpha},h \rangle=\lambda_{\alpha}(h)$ for any $h$. By Fischer pairing and equation \ref{equation_2}, we have $x_{i}P_{\alpha}=\sum_{k=1}^{r}\partial_{i}f_{k}P_{\alpha+e_{k}}$. 
		It remains to verify that $P_0\ne0$. Since $D(a)\ne0$, for every
		homogeneous polynomial $h$ of degree $q$ we have $\lambda_0(h)=\frac{h(a)}{D(a)}$. If $q=0$, take $h=1$. If $q>0$, then $d\ge2$, so $dF(0)=0$ and the rank assumption implies $a\ne0$. Choose a linear form $\ell$ such that
		$\ell(a)\ne0$. Then $\lambda_0(\ell^q)=\frac{\ell(a)^q}{D(a)}\ne0$.
		Thus $\lambda_0$ is nonzero, and its Fischer-pairing dual $P_0$ is
		nonzero. The $P_{\alpha}$ are the required polynomials for Corollary \ref{homogeneous_pde}, so the theorem holds for $r=n$.
		
		Next we deal with the general case. Since the rank of $dF(a)$ is $n$, we have $r \ge n$. Without loss of generality we may assume the Jacobian matrix of $(f_{1},...,f_{n})$ at $a$ is invertible. By the proof above, we can find homogeneous polynomials $Q_{\beta}$ indexed by $\beta \in \mathbb{N}^{n}$ such that they satisfy the conditions in Corollary \ref{homogeneous_pde}. For $\alpha=(\alpha_{1},...,\alpha_{r}) \in \mathbb{N}^{r}$, we construct the required polynomials $P_{\alpha}$ in Corollary \ref{homogeneous_pde} as follows. If any of $\alpha_{n+1},...,\alpha_{r}$ is not zero, we let $P_{\alpha}=0$ and if $\alpha_{n+1}=...=\alpha_{r}=0$, we let $P_{\alpha}=Q_{\alpha_{1},...,\alpha_{n}}$. One can verify that these $P_{\alpha}$ satisfy the conditions in Corollary \ref{homogeneous_pde}, so the result holds.
	\end{proof}
	
	We will use the theorem to prove a series of roots of the Bernstein-Sato polynomial of the general determinantal ideal. Before that, we need to fix some notations. Let $\mathcal{M}=(x_{ij})_{m \times n}$ be a $m \times n$ matrix with variables $x_{ij}(1 \le i \le m, 1 \le j \le n)$. For convenience we assume $m \le n$. For integer $1 \le r \le m$, let $I_{r}$ be the ideal generated by all $r$-th minors of $\mathcal{M}$. To use Theorem \ref{rank_thm}, we suppose $I_{r}=(f_{1},...,f_{k})$ and $F:=(f_{1},...,f_{k}): \mathbb{C}^{mn} \rightarrow \mathbb{C}^{k}$, where $f_{1},...,f_{k}$ are all $r \times r$ minors of $\mathcal{M}$. We fix a matrix $A=(I_{m} \ 0)$ and we prove the following lemma.
	
	\begin{lemma}\label{lemma_rank}
	With the notations above, if $r<m$, then the rank of $dF(A)$ is $mn$.
	\end{lemma}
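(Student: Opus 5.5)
The plan is to show that the linear map $dF(A):\mathbb{C}^{mn}\to\mathbb{C}^{k}$ is injective. I will test it against a well-chosen family of $r\times r$ minors and read off every entry of an arbitrary tangent direction. Write $H=(h_{ij})$ for a direction in $\mathbb{C}^{mn}$. For row and column index sets $R,C$ with $|R|=|C|=r$, let $\Delta_{R,C}$ be the corresponding minor. Its derivative is
\[
\frac{d}{dt}\Big|_{t=0}\det\big((A+tH)_{R,C}\big)=\sum_{(p,q)\in R\times C}(-1)^{\mathrm{pos}}\operatorname{cof}_{p,q}(A_{R,C})\,h_{pq},
\]
where $(-1)^{\mathrm{pos}}$ is the sign from the positions of $p$ in $R$ and $q$ in $C$. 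Since $A=(I_m\ 0)$, every submatrix $A_{R,C}$ is a partial permutation matrix, so its cofactors are easy to compute. The goal is to show that $dF(A)H=0$ forces $H=0$.

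\emph{Off-diagonal entries.} Fix $(i,j)$ with $i\neq j$, where $1\le i\le m$ and $1\le j\le n$. Choose $S\subset[m]\setminus\{i,j\}$ with $|S|=r-1$. This is possible because $r<m$ gives $r-1\le m-2$; if $j>m$ we only need $r-1\le m-1$. Put $R=\{i\}\cup S$ and $C=\{j\}\cup S$. Then $A_{R,C}$ is the identity on $S\times S$, while row $i$ and column $j$ are zero. Indeed $A_{ij}=0$, $A_{is}=0$ for $s\in S$ since $s\ne i$, and $A_{sj}=0$ since $s\neq j$. Hence every cofactor vanishes except the one at $(i,j)$, which is $\pm1$, and so $\frac{d}{dt}\Delta_{R,C}=\pm h_{ij}$. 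Therefore $dF(A)H=0$ gives $h_{ij}=0$ for all $i\neq j$.

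\emph{Diagonal entries.} This is the only step needing a small extra argument. For $T\subset[m]$ with $|T|=r$, the minor $\Delta_{T,T}$ has $A_{T,T}=I_r$, so its derivative is $\sum_{t\in T}h_{tt}$. Suppose all these sums vanish. For $i\neq k$ in $[m]$, choose $U\subset[m]\setminus\{i,k\}$ with $|U|=r-1$, again using $r<m$. Subtracting the relations for $U\cup\{i\}$ and $U\cup\{k\}$ gives $h_{ii}=h_{kk}$. So all diagonal entries share a common value $c$, and any single relation then reads $rc=0$, hence $c=0$.

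Combining the two cases, $H=0$, so $dF(A)$ is injective and its rank is $mn$. The main care point is exactly where $r<m$ enters: it is needed to fit the auxiliary sets $S$ and $U$ of size $r-1$ inside $[m]$ while avoiding two prescribed indices. This also explains why the lemma fails for $r=m$, the maximal minors case.
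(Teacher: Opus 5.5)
Your proof is correct and follows the paper's argument: the minors with row set $K\cup\{p\}$ and column set $K\cup\{q\}$, where $|K|=r-1$ and $p,q\notin K$, kill the off-diagonal entries, and the principal minors force all $h_{ii}$ to be equal and hence zero. You also supply the cofactor computation and the subtraction step that the paper leaves implicit. Your closing aside is slightly overstated: your argument only shows where $r<m$ is used, and for $m=r=1$ the map $F$ is the identity, so the rank is still $mn$; the actual failure for $r=m\ge 2$ comes from the invariance of maximal minors under left multiplication by $\mathrm{SL}_m$.
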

	\begin{proof}
	The proof is straightforward. Let $H=(h_{ij})$ be a $m \times n$ matrix in the kernel of $dF(A)$, we want to prove that $H=0$. For subset $I \subset \{1,...,m\}$ and $J \subset \{1,...,n\}$ with $|I|=|J|$, we denote the minor generated by rows and columns indexed by $I$ and $J$ by $\delta_{IJ}$. Since $H \in \mathrm{ker}(dF(A))$, we have $\sum_{ij}(\partial_{x_{ij}}\delta_{IJ})(A) \cdot h_{ij}=0$ for all $r \times r$ minors $\delta_{IJ}$. For $h_{pq}$ with $p \neq q$, since $r<m$, there exists a subset $K \subset \{1,...,m\}$ such that $|K|=r-1$ and $p,q \notin K$. We take $I=K \cup \{p\}$ and $J=K \cup \{q\}$ and by considering the equation $\sum_{ij}(\partial_{x_{ij}}\delta_{IJ})(A) \cdot h_{ij}=0$, we can get $h_{pq}=0$. Now we consider the equation $\sum_{ij}(\partial_{x_{ij}}\delta_{IJ})(A) \cdot h_{ij}=0$ for case when $I=J$. Then we have $\sum_{i \in I}h_{ii}=0$. Since this summation is over any subset $I \subset \{1,...,m\}$ and $r<m$, all $h_{ii}$ are equal, thus $h_{ii}=0$ for all $i$. In conclusion we have $H=0$, and the result follows.
	\end{proof}
	
	Now we can give a series of roots of the Bernstein-Sato polynomial of the determinantal ideal $I_{r}$ and prove the strong monodromy conjecture of it. Note that the strong monodromy conjecture for maximal minors has been proved in \cite{Bernstein_for_maximal_minors}.
	
	\begin{theorem}[\cite{Bernstein_for_maximal_minors}]
	In the notations above, the strong monodromy conjecture for $I_{m}$ holds.
	\end{theorem}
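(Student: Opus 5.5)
This statement is attributed to \cite{Bernstein_for_maximal_minors}, so the plan is to assemble the two known ingredients rather than build new machinery. First, \cite{Bernstein_for_maximal_minors} computes the Bernstein-Sato polynomial of the ideal of maximal minors of a generic $m\times n$ matrix with $m\le n$:
\[
b_{I_m}(s)=\prod_{i=n-m+1}^{n}(s+i).
\]
Its roots are therefore exactly $-(n-m+1),\ldots,-n$. Second, we need the poles of $Z^{\mathrm{mot}}_{I_m}(s)$. For these we use a log resolution and Theorem \ref{theorem_denef_loeser}.

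The natural resolution comes from the $\mathrm{GL}_m\times\mathrm{GL}_n$-orbit stratification. The determinantal varieties $\{\operatorname{rank}\le k\}$, for $0\le k\le m-1$, are blown up successively along the (strict transforms of the) smallest strata. This gives exceptional divisors $E_0,\ldots,E_{m-1}$, where $E_k$ lies over the locus of rank $\le k$.

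The numerical data are computed as follows. At a generic point of the rank-$k$ stratum, the minors of the transversal slice form a generic $(m-k)\times(n-k)$ matrix. Its maximal minors vanish to order $N_k=m-k$ along the divisor. The discrepancy equals the codimension of the stratum, so $\nu_k=(m-k)(n-k)$. The candidate poles are therefore
\[
-\frac{\nu_k}{N_k}=-(n-k),\qquad 0\le k\le m-1.
\]
These are exactly $-n,\ldots,-(n-m+1)$, which are roots of $b_{I_m}$. Since every pole of $Z^{\mathrm{mot}}_{I_m}(s)$ lies among these candidates, the strong monodromy conjecture follows. This holds whether or not every candidate is an actual pole.

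The main obstacle is making sure the motivic zeta function has no other candidate poles. That requires verifying that the iterated blow-up really is a log resolution of $I_m$ with the stated numerical data. Since this is the content of \cite{Bernstein_for_maximal_minors}, and of the computation of $Z^{\mathrm{mot}}$ for determinantal ideals used in \cite{CZ25}, I would cite these results rather than redo the resolution.
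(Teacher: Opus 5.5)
Your proposal is correct and takes essentially the same approach as the paper. The paper simply imports this theorem from \cite{Bernstein_for_maximal_minors}, and its own reasoning in the proof of Theorem \ref{SMC_for_det} is the same as yours: it combines $b_{I_m}(s)=\prod_{i=n-m+1}^{n}(s+i)$ with the pole list $-\frac{(m-k)(n-k)}{r-k}$ from \cite[Remark 3.4]{CZ25}, which for $r=m$ is exactly $\{-(n-k):0\le k\le m-1\}$.

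Two small wording fixes. First, it is $\nu_k$ (the log discrepancy), not the discrepancy $\nu_k-1$, that equals the codimension $(m-k)(n-k)$. Second, when $m=n$ the divisor $E_{m-1}$ is the strict transform of $\{\det=0\}$ rather than an exceptional divisor; its data $(N,\nu)=(1,1)$ still fit your formula.
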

	
	\begin{theorem}\label{SMC_for_det}
	For integer $1\le r <m$, $0 \le k \le r-1$ and $0 \le q \le r-k-1$, we have $b_{I_{r}}(-\frac{(m-k)(n-k)+q}{r-k})=0$. In particular, the strong monodromy conjecture for $I_{r}$ holds for $r<m$. Combining with \cite{Bernstein_for_maximal_minors} we know that the strong monodromy conjecture holds for general determinantal varieties. 
	\end{theorem}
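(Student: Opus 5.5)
The plan has two parts: first produce the listed roots, then compare them with the poles of the motivic zeta function.

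\textbf{Step 1: the case $k=0$.} Here we apply Theorem \ref{rank_thm} directly to $I_r$. Its generators are the $r\times r$ minors, all homogeneous of degree $d=r$ in $mn$ variables. Lemma \ref{lemma_rank} says $dF(A)$ has rank $mn$ when $r<m$. Hence $b_{I_r}\bigl(-\frac{mn+q}{r}\bigr)=0$ for $0\le q\le r-1$.

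\textbf{Step 2: the case $k\ge1$, by localization.} We pass to a point $x_0$ whose matrix has rank exactly $k$, for instance $x_0=\begin{pmatrix} I_k & 0\\ 0&0\end{pmatrix}$. Near $x_0$ the upper-left $k\times k$ block $B$ is invertible. The standard Schur-complement change of coordinates is an analytic (indeed algebraic on the open set $\det B\neq0$) automorphism. It transforms the germ of $I_r$ at $x_0$ into the pullback of $I_{r-k}$ of a generic $(m-k)\times(n-k)$ matrix $C=D-\Gamma B^{-1}\Delta$, with the remaining coordinates as smooth parameters. One should check that the ideal of $r$-minors of $\begin{pmatrix}B&\Delta\\ \Gamma&D\end{pmatrix}$ equals, locally, the ideal of $(r-k)$-minors of $C$. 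This follows from row and column operations by invertible matrices, which preserve determinantal ideals.

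Next we need two facts about local Bernstein--Sato polynomials of ideals. First, they are invariant under analytic coordinate changes, since the $V$-filtration definition of Budur--Mustaţă--Saito is intrinsic. Second, they are unchanged by adding smooth factors $\mathbb{C}^N$. Granting these, $b_{I_{r-k}(C),0}(s)$ divides $b_{I_r,x_0}(s)$, which divides $b_{I_r}(s)$ by Theorem \ref{thm:local-global-ideal}. Now apply Step 1 to the $(m-k)\times(n-k)$ problem with $r-k<m-k$, so $d=r-k$ and there are $(m-k)(n-k)$ variables. Since the ideal is homogeneous, its local polynomial at $0$ equals the global one. This gives $b_{I_r}\bigl(-\frac{(m-k)(n-k)+q}{r-k}\bigr)=0$ for $0\le q\le r-k-1$.

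\textbf{Step 3: the strong monodromy conjecture.} Here we recall the poles of $Z^{\mathrm{mot}}_{I_r}$, which are computed in \cite{CZ25} and in the literature on determinantal zeta functions via the standard log resolution by successive blowups of rank loci. The candidate poles are $-\frac{(m-k)(n-k)}{r-k}$ for $0\le k\le r-1$, coming from the exceptional divisors over the rank $\le k$ loci. Their numerical data are $N_k=r-k$ and $\nu_k=(m-k)(n-k)$. The actual poles form a subset of these. Each such value is the $q=0$ case of Step 2, so every pole is a root of $b_{I_r}$. Combined with \cite{Bernstein_for_maximal_minors} for $r=m$, this gives the full statement.

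\textbf{Main obstacle.} The delicate point is Step 2: justifying rigorously that the local Bernstein--Sato polynomial of an ideal is invariant under analytic isomorphism and under products with smooth factors. The cleanest route is the $V$-filtration characterization of \cite{Bernstein_Sato_polynomial_of_arbitrary_varieties}, together with the fact that $b$ does not depend on the choice of generators. Alternatively, one could avoid localization entirely by constructing the $P_\alpha$ of Theorem \ref{pde_thm} directly near a rank-$k$ matrix. That route, however, seems harder.
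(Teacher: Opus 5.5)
Your proposal is correct and follows the paper's proof: Theorem \ref{rank_thm} with Lemma \ref{lemma_rank} handles $k=0$; for $k\ge1$ you reduce to the $(r-k)\times(r-k)$ minors of a generic $(m-k)\times(n-k)$ matrix; and the list of poles comes from \cite{CZ25}. The only difference is that you spell out the reduction step (Schur complement at a rank-$k$ point, invariance of local Bernstein--Sato polynomials under isomorphisms and smooth factors, and local equals global for homogeneous ideals), whereas the paper simply takes it from \cite[Remark 4.3]{CZ26}.
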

	\begin{proof}
	 By Theorem \ref{rank_thm} and Lemma \ref{lemma_rank}, we have $b_{I_{r}}(-\frac{mn+q}{r})=0$ for $0 \le q \le r-1$. Next we consider the small determinantal variety defined by $(r-k) \times (r-k)$ minors in a $(m-k) \times (n-k)$ matrix for $1 \le k \le r-1$. By \cite[Remark 4.3]{CZ26}, the roots of the Bernstein-Sato polynomials for these varieties are also the roots of $b_{I_{r}}$. We can repeat the argument of Theorem \ref{rank_thm} and Lemma \ref{lemma_rank} to conclude that $-\frac{(m-k)(n-k)+q}{r-k}$ are also roots of $b_{I_{r}}$ for $0 \le k \le r-1$ and $0 \le q \le r-k-1$. By \cite[Remark 3.4]{CZ25}, the poles of the motivic zeta function of $I_{r}$ are $-\frac{(m-k)(n-k)}{r-k}$, so the strong monodromy conjecture for $I_{r}$ holds.
	\end{proof}
	\begin{remark}
	Note that Theorem \ref{SMC_for_det} only give part of the roots of the Bernstein-Sato polynomials of determinantal varieties, while \cite[Theorem B]{CZ25} gives all the possible eigenvalues of the Verdier monodromy. Although these eigenvalues can be covered by the roots given in Theorem \ref{SMC_for_det}, \cite[Theorem B]{CZ25} can not be covered by Theorem \ref{SMC_for_det} and we still do not know the Bernstein-Sato polynomials of general determinantal varieties.
	\end{remark}
	
	Another application of the Theorem \ref{pde_thm} is a cohomological judgment of the roots of the Bernstein-Sato polynomial. Note that this judgment can be covered by \cite{Ta26}, but it is discovered on our own.
	\begin{proposition}
		Let $f \in \mathbb{C}[x_{1},...,x_{n}]$ be a polynomial with $f(0)=0$ and let $\pi:Y \rightarrow \mathbb{C}^{n}$ be a log resolution of $V(f)$. Assume $g=f \circ \pi$ and $E_{i}(i \in S)$ are the irreducible components of $\mathrm{div}(g)$ and $K_{Y/\mathbb{C}^{n}}$ such that $\mathrm{div}(g)=\sum_{i \in S}N_{i}E_{i}$ and $K_{Y/\mathbb{C}^{n}}=\sum_{i \in S}(\nu_{i}-1)E_{i}$. Fix some $E_{0}$ such that $\pi(E_{0})=\{0\}$ and let $N=N_{0}, \nu=\nu_{0}$. Let $\rho=\frac{\nu}{N}$ and $U:=E_{0}\setminus \cup_{j \neq 0}E_{j}$. We define a multi-valued differential form $\omega_{\rho}$ on $E_{0}$ as follows.
		$$\omega_{\rho}:=\frac{g^{1-\frac{\nu}{N}}\pi^{*}(dx_{1} \wedge ... \wedge dx_{n})}{dg}|_{E_{0}}.$$
		The quotient in this formula is understood as the residue along
		$E_0$. More explicitly, near any point of $U$, choose local coordinates
		$(y,w_1,\ldots,w_{n-1})$ with $E_0=\{y=0\}$ and write
		\[
		g=u\,y^N,\qquad
		\pi^*(dx_1\wedge\cdots\wedge dx_n)
		=v\,y^{\nu-1}\,dy\wedge dw_1\wedge\cdots\wedge dw_{n-1},
		\]
		where $u$ and $v$ are nowhere-vanishing holomorphic functions. On a
		chosen branch, this gives
		\[
		\omega_\rho
		=\frac{1}{N}u(0,w)^{-\rho}v(0,w)\,
		dw_1\wedge\cdots\wedge dw_{n-1}.
		\]
		Here $H^{n-1}(U,\mathcal{L}_\rho)$ is understood as the cohomology of
		the de Rham complex with coefficients in the rank-one local
		system determined by these branches. If $[\omega_{\rho}] \in H^{n-1}(U,\mathcal{L}_{\rho})$ is non-zero, then $b_{f}(-\rho)=0$.
	\end{proposition}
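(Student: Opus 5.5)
We will apply Theorem \ref{pde_thm} with $r=1$ and $f_1=f$. The task is to manufacture polynomials $P_k$, $k\in\mathbb N$, satisfying conditions (1)--(4) from the class $[\omega_\rho]$. We follow the proof of Theorem \ref{rank_thm}. There the functionals $\lambda_\alpha$ were built from derivatives in the target coordinate $v$ of a pushed-forward density. Here we build them from the residue of a multivalued form along $E_0$, paired against cycles of $U$. Since the class is nonzero and $U$ is a smooth (generally noncompact) variety, twisted Poincaré duality gives a locally finite (or compactly supported) twisted cycle $\sigma$ in $U$, with coefficients in the dual local system $\mathcal L_\rho^\vee$, such that $\int_\sigma\omega_\rho\neq0$. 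For $h\in\mathbb C[x]$ and $k\in\mathbb N$ we set
\[
\lambda_k(h):=\int_\sigma \operatorname{Res}_{E_0}\frac{(\pi^*h)\,g^{-\rho-k}\,\pi^*(dx_1\wedge\cdots\wedge dx_n)}{dg}\cdot c_k,
\]
that is, the coefficient of $y^{-1}$-type residue of $\pi^*(h\,dx)\,g^{-\rho-k-1}$ along $E_0$, integrated over $\sigma$. The constants $c_k$ are normalizations to be chosen so that (3) holds. We then define $P_k$ as the Fischer dual of $\lambda_k$ via Lemma \ref{Fischer_pairing}, i.e.\ $\langle P_k,h\rangle=\lambda_k(h)$.

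Each condition then becomes a statement about these residue integrals.

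\textbf{Condition (2).} This is the identity $\lambda_k(\partial_i h)=\lambda_{k+1}(h\,\partial_if)$. We plan to prove it by Stokes' theorem on a small tube around $\sigma$ in $Y$. The form $\iota_{\partial_i}(h\,g^{-\rho-k}dx)$ has exterior derivative $(\partial_ih)g^{-\rho-k}dx-(\rho+k)h\,\partial_if\,g^{-\rho-k-1}dx$. This is the divergence computation of Theorem \ref{rank_thm}, transplanted upstairs. Integrating over a tube whose boundary is a cycle, the exact term vanishes, and we obtain the recursion after fixing $c_{k+1}/c_k=\rho+k$ up to sign.

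\textbf{Condition (3).} The relation $f(\partial)P_{k+1}=(\rho+k)P_k$ is the analogous identity with $h$ replaced by $hf$. Since $g^{-\rho-k}\cdot g=g^{-\rho-k+1}$ shifts the index by one, it holds with the same normalization.

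\textbf{Condition (4).} Here $f^{k+1}(\partial)P_k=0$ means $\lambda_k(hf^{k+1})=0$ for all $h$. The integrand $\pi^*(hf^{k+1})\,g^{-\rho-k-1}$ then has exponent $-\rho$ along $E_0$, i.e.\ order $y^{N(1-\rho)-N+\nu-1}\cdots$. More precisely, the tube integral picks out the coefficient producing $y^{-1}$, and after multiplying by $f^{k+1}$ the form becomes holomorphic times $y^{\nu-1-N\rho}=y^{-1}$ with no further pole, hence it gives exactly $\int_\sigma(\pi^*h)\omega_\rho$-type terms times $f^{k+1}|_{E_0}=0$. So it vanishes because $g|_{E_0}=0$.

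\textbf{Condition (1).} $P_0\ne0$ follows by taking $h=1$: $\lambda_0(1)=\int_\sigma\omega_\rho\ne0$.

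We expect the main obstacle to be making the ``residue'' precise. The powers $g^{-\rho-k}$ have poles of order growing with $k$ along $E_0$, and the other components $E_j$ meet the closure of the tube. We plan to replace the naive residue by a Leray coboundary: take a small circle bundle over $\sigma$ inside $Y\setminus\bigcup_j E_j$, lifted with the branch of $g^{-\rho}$ compatible with $\mathcal L_\rho$. All $\lambda_k$ become honest integrals over one fixed closed twisted $n$-cycle $\tilde\sigma$ in $\pi^{-1}(\mathbb C^n\setminus V(f))$, namely $\lambda_k(h)=c_k\int_{\tilde\sigma}h\,f^{-\rho-k}dx$. Then (2) and (3) are plain Stokes computations, and (1) becomes the statement that this coboundary integral recovers $\int_\sigma\omega_\rho$ up to $2\pi i/N$. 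Condition (4) is the delicate point: it needs the vanishing $\int_{\tilde\sigma}h f^{1-\rho}\,dx=0$. We would obtain this by shrinking the tube radius to $0$, which is possible because the integrand is bounded near $E_0$ since $\nu-N\rho=0$. This requires choosing $\sigma$ to avoid the other $E_j$ or to have controlled behavior near them, and this is where we expect difficulty, since $\sigma$ may be only locally finite in $U$.
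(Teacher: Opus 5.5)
Your final formulation, $\lambda_k(h)=c_k\int_{\tilde\sigma}h\,f^{-\rho-k}\,dx$ with $c_{k+1}=(\rho+k)c_k$ over a Leray tube $\tilde\sigma$ around a cycle detecting $[\omega_\rho]$, is the paper's construction. There, $c_k(a)=\frac{1}{2\pi iN}\int_{A_\epsilon}\tau^{-\nu-Nk}\Phi^*(a\,dx)$, where $\tau^{-\nu-Nk}=f^{-\rho-k}$ on the normalized $N$-fold cyclic cover $Y'$, and $\lambda_k=\rho(\rho+1)\cdots(\rho+k-1)c_k$. Passing to $Y'$ is just the paper's way of trivializing $\mathcal L_\rho$ instead of using twisted cycles, and your Stokes derivations of (2) and (3) are right.

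As written, however, there is a real gap. Lemma \ref{Fischer_pairing} gives non-degeneracy. It does not say that every linear functional on $\mathbb C[x_1,\ldots,x_n]$ equals $\langle P,\cdot\rangle$ for a \emph{polynomial} $P$; in general the dual is only a formal power series, and Theorem \ref{pde_thm} requires polynomials. You must show that $\lambda_k$ vanishes on $\mathfrak m^{Nk+1}$, where $\mathfrak m=(x_1,\ldots,x_n)$. This is exactly where the hypothesis $\pi(E_0)=\{0\}$ enters, and your proposal never uses it. Each $\pi^*x_i$ is divisible by $y$ near $U$. Hence for $a\in\mathfrak m^{Nk+1}$ the form $\pi^*(a)\,g^{-\rho-k}\pi^*(dx)=\pi^*(a)\,u^{-\rho-k}v\,y^{-Nk-1}\,dy\wedge dw$ has no pole along $E_0$, and its tube integral vanishes. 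This is property (4) in the paper. The same hypothesis gives $\lambda_0(a)=a(0)\lambda_0(1)$, so $P_0$ is a nonzero constant. Alternatively, you could check that the proof of Theorem \ref{pde_thm} goes through verbatim for formal power series $P_\alpha$, but then you are not applying the theorem as stated.

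The obstacle you flag for condition (4) is self-inflicted. A nonzero class in the ordinary (not compactly supported) twisted de Rham cohomology $H^{n-1}(U,\mathcal L_\rho)$ is detected by a \emph{finite} twisted cycle $\sigma\in H_{n-1}(U,\mathcal L_\rho^\vee)$, by de Rham's theorem plus universal coefficients. Locally finite cycles pair with compactly supported cohomology, so Poincar\'e duality is the wrong tool here.

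With $\sigma$ compact, everything you worried about follows:
\begin{itemize}
\item Its support lies at positive distance from $\bigcup_{j\ne0}E_j$, so the tube fits in a neighborhood disjoint from those components.
\item The tube is a well-defined twisted $n$-cycle because $g^{-\rho}=u^{-\rho}y^{-\nu}$ has no monodromy around $E_0$.
\item Its integrals are independent of the radius by Stokes.
\item Condition (4) is immediate: $\lambda_k(hf^{k+1})=c_k\int_{\tilde\sigma}hf^{1-\rho}dx$, and $g^{1-\rho}\pi^*(h\,dx)=\pi^*(h)\,u^{1-\rho}v\,y^{N-1}\,dy\wedge dw$ is holomorphic in $y$ across $E_0$, so the integral is zero.
\end{itemize}
The paper instead obtains (4) from the recursion together with $\lambda_0(fa)=f(0)a(0)=0$.

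Finally, discard your first definition of $\lambda_k$, the residue of $g^{-\rho-k-1}\pi^*(h\,dx)$: it is shifted by one in $k$. With it, $\lambda_0(1)$ is not a multiple of $\int_\sigma\omega_\rho$. Moreover $f^{k+1}g^{-\rho-k-1}\pi^*(h\,dx)=\pi^*(h)\,u^{-\rho}v\,y^{-1}dy\wedge dw$ keeps a genuine simple pole, whose residue for $h=1$ is $N\int_\sigma\omega_\rho\neq0$. So (4) actually fails there; your argument in that paragraph uses the vanishing of $f^{k+1}$ on $E_0$ twice. With the corrected normalization, up to orientation, $\lambda_0(1)=2\pi i N\,c_0\int_\sigma\omega_\rho\neq0$ (the factor is $2\pi iN$, not $2\pi i/N$).
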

	\begin{proof}
		Let $\lambda:\mathbb{C} \rightarrow \mathbb{C}, t \mapsto t^{N}$ and $p_{1}: Y \times_{\mathbb{C}} \mathbb{C}$ be the fiber product. Suppose $p_{0}:Y' \rightarrow Y \times_{\mathbb{C}} \mathbb{C}$ is the normalization of $Y \times_{\mathbb{C}} \mathbb{C}$ and $p:=p_{1} \circ p_{0}: Y' \rightarrow Y$ is the composition. Let $\Phi:=\pi \circ p$ and we have $f \circ \Phi=g \circ p$. 
		
		After shrinking each coordinate neighborhood, choose an $N$-th root
		$h$ of $u$. Near every point of $U$, the base change $t^N=g$ is given
		by
		\[
		t^N=u y^N,\qquad
		\prod_{\xi^N=1}(t-\xi h y)=0.
		\]
		Its normalization is the disjoint union of the $N$ smooth branches
		$t=\zeta h y$, and $p$ maps each branch biholomorphically onto the
		chosen neighborhood. Choose an open neighborhood $W$ of $U$ in $Y$
		such that
		\[
		W\cap\bigcup_{j\ne 0}E_j=\varnothing,
		\]
		and put $W'=p^{-1}(W)$ and $U'=p^{-1}(U)$. At points of
		$W\setminus E_0$, the function $g$ is nonzero, so the base change
		$t^N=g$ is unramified there as well. Since $p$ is finite, these local
		descriptions show that $p|_{W'}:W'\to W$ is a finite unramified
		covering of degree $N$. In particular, $p|_{U'}:U'\to U$ is the restriction of this cyclic covering.
		
		Let $\tau$ be the pullback to $Y'$ of the coordinate $t$. The local expression $\tau=\xi h y$ shows that $d\tau\ne 0$ along $U'$. Thus, near every point of $U'$, we can choose holomorphic coordinates $(\tau,w_1,\ldots,w_{n-1})$ on $W'$ such that $U'=\{\tau=0\}$ and $g\circ p=\tau^N$. Since the order of $K_{Y/\mathbb{C}^{n}}$ over $E_{0}$ is $\nu-1$, we have $\Phi^{*}(dx_{1} \wedge...\wedge dx_{n})=N\tau^{\nu-1}d\tau \wedge \Theta$ and $\Theta=N^{-1}a(\tau,w_{1},...,w_{n-1})dw_{1}\wedge ... \wedge dw_{n-1}$. Let $\Theta_{t}:=\Theta|_{\tau=t}$, and through direct computation one can verify that $p^{*}\omega_{\rho}=\Theta_{0}$. 
	
		Moreover, for $\xi=e^{2\pi i/N}$, the deck transformation
		$D_\xi$ satisfies $D_\xi^*\Theta_0=\xi^{-\nu}\Theta_0$, $p \circ D_{\xi}=p$ and $\tau \circ D_{\xi}=\xi \tau$. We use the convention that $\mathcal{L}_\rho$ is the local system associated with this character of the cyclic covering.
		
		We claim that $[\Theta_0]\ne 0$ in $H_{\mathrm{dR}}^{n-1}(U',\mathbb{C})$.
		Suppose that $\Theta_0=d\beta$. Define
		\[
		\overline{\beta}
		:=\frac{1}{N}\sum_{k=0}^{N-1}\xi^{\nu k}(D_\xi^k)^*\beta.
		\]
		Since $D_\xi^*\Theta_0=\xi^{-\nu}\Theta_0$, we have
		$d\overline{\beta}=\Theta_0$ and
		$D_\xi^*\overline{\beta}=\xi^{-\nu}\overline{\beta}$.
		Thus $\overline{\beta}$ descends to an $\mathcal{L}_\rho$-valued
		$(n-2)$-form $\beta_\rho$ on $U$. Since $p^*\omega_\rho=\Theta_0$,
		the descended form satisfies $d\beta_\rho=\omega_\rho$,
		contradicting $[\omega_\rho]\ne0$. Hence $[\Theta_0]\ne0$.
		
		By the de Rham pairing, there is a finite smooth singular
		$(n-1)$-cycle $\Gamma$ in $U'$ such that
		$\int_\Gamma\Theta_0\ne0$. Rescaling $\Gamma$, we may assume that
		$\int_\Gamma\Theta_0=1$. Let $K$ be the support of $\Gamma$, and choose
		a relatively compact open neighborhood $V$ of $K$ in $U'$. 
		
		Let $K=\operatorname{supp}\Gamma$. Choose a compact neighborhood
		$C$ of $K$ in the fiber $U'=\tau^{-1}(0)$, and put
		$V=\operatorname{int}(C)$. By the differential tubular-neighborhood
		theorem for submersions \cite[Lemmas 3--4]{SS71}, after shrinking
		$\epsilon>0$ there is a smooth embedding
		\[
		T:\Delta_\epsilon\times V\longrightarrow W'
		\]
		such that
		\[
		T(0,p)=p,\qquad \tau(T(t,p))=t.
		\]
		
		For small $\epsilon$, we define $A_{\epsilon}:=T_{*}(\{|t|=\epsilon\} \times \Gamma)$ and 
		$$c_{k}(a):=\frac{1}{2\pi iN}\int_{A_{\epsilon}}\tau^{-\nu-Nk}\Phi^{*}(adx_{1} \wedge ... \wedge x_{n})$$
		for any $k \in \mathbb{N}$ and $a \in \mathbb{C}[x_{1},...,x_{n}]$. 
		
		This definition is independent of the choice of $\epsilon$. In fact, for $\epsilon_{1}<\epsilon_{2}$, let $A_{\epsilon_{1},\epsilon_{2}}=T_{*}(\{\epsilon_{1}<|t|<\epsilon_{2}\} \times \Gamma)$. We have $\partial A_{\epsilon_{1},\epsilon_{2}}=A_{\epsilon_{1}}-A_{\epsilon_{2}}$. By stokes theorem, 
		\begin{flalign*}
			&\frac{1}{2\pi iN}\int_{A_{\epsilon_{1}}}\tau^{-\nu-Nk}\Phi^{*}(adx_{1} \wedge ... \wedge x_{n})-\frac{1}{2\pi iN}\int_{A_{\epsilon_{2}}}\tau^{-\nu-Nk}\Phi^{*}(adx_{1} \wedge ... \wedge x_{n})   \\
			=&\frac{1}{2\pi iN}\int_{\partial A_{\epsilon_{1},\epsilon_{2}}}\tau^{-\nu-Nk}\Phi^{*}(adx_{1} \wedge ... \wedge x_{n})  \\
			=&\frac{1}{2\pi iN}\int_{A_{\epsilon_{1},\epsilon_{2}}}d(\tau^{-\nu-Nk}\Phi^{*}(adx_{1} \wedge ... \wedge x_{n}))  \\
			=&0.
		\end{flalign*}
		
		On the region $\varepsilon_1\leq|\tau|\leq\varepsilon_2$, the form
		$\tau^{-\nu-Nk}\Phi^*(a\,dx_1\wedge\cdots\wedge dx_n)$ is a holomorphic
		$n$-form. Hence it is closed, which justifies the last equality in the
		Stokes computation.
	
		We will prove several properties of $c_{k}(a)$ and construct the solution of the PDE in Theorem \ref{pde_thm} by taking dual of these $c_{k}$. Let $\Gamma_{t}:=T(t,\cdot)_{*}\Gamma$, and we have
		$$c_{k}(a)=\frac{1}{2\pi i}\int_{|t|=\epsilon}t^{-Nk-1}\int_{\Gamma_{t}}(a \circ \Phi)\Theta_{t}dt.$$
		
		Put $\eta=(a\circ\Phi)\Theta$ and
		$\eta_t=\eta|_{\tau=t}$. Since $W'_t:=\{\tau=t\}$ has complex
		dimension $n-1$, the restriction $\eta_t$ is a holomorphic
		top-degree form on $W'_t$, and hence
		$d_{W'_t}\eta_t=0$. Write $T_t(p)=T(t,p)$. Then
		\[
		F_a(t):=\int_{\Gamma_t}\eta_t
		=\int_\Gamma T_t^*\eta_t .
		\]
		Differentiating this integral with respect to $\bar t$, let
		$Z_{\bar t}$ denote the corresponding variation vector field along
		$\Gamma_t$. Since $\tau\circ T(t,p)=t$, the vector field $Z_{\bar t}$
		is tangent to the fiber $W'_t$. Cartan's formula gives
		\[
		\frac{\partial}{\partial\bar t}F_a(t)
		=\int_{\Gamma_t}\mathcal{L}_{Z_{\bar t}}\eta_t
		=\int_{\Gamma_t}d_{W'_t}
		\bigl(\iota_{Z_{\bar t}}\eta_t\bigr)
		=0,
		\]
		where we used $d_{W'_t}\eta_t=0$ and $\partial\Gamma_t=0$.
		Therefore $F_a(t)$ is holomorphic about $t$.
		
		We prove the following several properties.
		
		$(1)$ $c_{0}(a)=a(0)$.
		
		Since $\Phi|_{U'}=0$ and $\int_\Gamma\Theta_0=1$, we obtain
		\[
		c_0(a)=F_a(0)
		=\int_\Gamma(a\circ\Phi)\Theta_0
		=a(0)\int_\Gamma\Theta_0
		=a(0).
		\]
		
		$(2)$ $c_{k+1}(fa)=c_{k}(a)$.
		
		Using $f \circ \Phi=\tau^{N}$, we have $\Phi^{*}(fadx_{1} \wedge... \wedge dx_{n})=\tau^{n}\Phi^{*}(adx_{1} \wedge ... \wedge dx_{n})$, and the result follows.
		
		$(3)$ $c_{k}(\partial_{i}a)=(\rho+k)c_{k+1}(\partial_{i}f \cdot a)$.
		
		Since $f \circ \Phi=\tau^{N}$, we have $\Phi^{*}df=N\tau^{N-1}d\tau$. Fix $i$ and wedge $(-1)^{i-1}\Phi^{*}(adx_{1}\wedge...\wedge \hat{dx_{i}}\wedge...\wedge dx_{n})$, we can get
		$$\Phi^{*}(\partial_{i}f a dx_{1} \wedge ... \wedge dx_{n})=(-1)^{i-1}N\tau^{N-1}d\tau \wedge\Phi^{*}(adx_{1}\wedge...\wedge \hat{dx_{i}}\wedge...\wedge dx_{n}),$$
		where $\hat{dx_{i}}$ means $dx_{i}$ is not there. Now we can calculate as follows.
		\begin{flalign*}
			&c_{k}(\partial_{i}a)-(\rho+k)c_{k+1}(\partial_{i}f \cdot a) \\
			=&\frac{1}{2\pi iN}\int_{A_{\epsilon}}\tau^{-\nu-Nk}\Phi^{*}(\partial_{i}adx_{1} \wedge ... \wedge x_{n})-(\rho+k)\tau^{-\nu-Nk-N}\Phi^{*}(\partial_{i}f \cdot adx_{1} \wedge ... \wedge x_{n})  \\
			=&\frac{1}{2\pi iN}\int_{A_{\epsilon}}d((-1)^{i-1}\tau^{-\nu-Nk}\Phi^{*}(adx_{1} \wedge ... \wedge \hat{dx_{i}} \wedge ... \wedge dx_{n}))  \\
			=&0.
		\end{flalign*}
		
		$(4)$ $c_k$ depends only on the terms of $a$ of degree
		at most $Nk$.
		
		It suffices to show that if $a \in (x_{1},...,x_{n})^{Nk+1}$, then $c_{k}(a)=0$. Since $\Phi|_{U'}=0$, there exist holomorphic functions $b_{i}$ for every $i$ such that $x_{i} \circ \Phi=\tau b_{i}$. This implies that $\tau^{-\nu-Nk}\Phi^{*}(adx_{1} \wedge ...\wedge dx_{n})=a \circ \Phi N \tau^{-Nk-1}d\tau \wedge \Theta$ is holomorphic because the order of $\tau$ of $a \circ \Phi$ is at least $Nk+1$. Thus $c_{k}(a)=0$.
		
		Now we define a $\mathbb{C}$-linear map $\lambda_{k}:\mathbb{C}[x_{1},...,x_{n}] \rightarrow \mathbb{C}$ as $\lambda_{k}=\rho(\rho+1)...(\rho+k-1)c_{k}$. Then we have $\lambda_{0}(a)=a(0)$, $\lambda_{k+1}(fa)=(\rho+k)\lambda_{k}(a)$ and $\lambda_{k+1}(\partial_{i}f \cdot a)=\lambda_{k}(\partial_{i}a)$. 
		
		The property $(4)$ also holds for $\lambda_k$. Therefore $\lambda_k$
		is supported on the finite-dimensional space of polynomials of degree
		at most $Nk$, and its Fischer-pairing dual is the polynomial
		\[
		P_k=\sum_{|\beta|\le Nk}\frac{\lambda_k(x^\beta)}{\beta!}x^\beta,
		\qquad
		\langle P_k,a\rangle=\lambda_k(a).
		\]
		
		We now verify the conditions of Theorem \ref{pde_thm} for the polynomials $P_k$.
		First, $\lambda_0(a)=a(0)$ implies $P_0=1$. For every polynomial $a$,
		the recurrences for $\lambda_k$ and the adjointness of the Fischer
		pairing give
		\[
		\begin{aligned}
			\langle x_iP_k,a\rangle
			&=\lambda_k(\partial_i a)
			=\lambda_{k+1}((\partial_i f)a)
			=\langle (\partial_i f)(\partial)P_{k+1},a\rangle,\\
			\langle f(\partial)P_{k+1},a\rangle
			&=\lambda_{k+1}(fa)
			=(\rho+k)\lambda_k(a)
			=\langle(\rho+k)P_k,a\rangle.
		\end{aligned}
		\]
		Thus conditions (2) and (3) of Theorem 3.3 hold. For condition (4),
		using $\lambda_{j+1}(fa)=(\rho+j)\lambda_j(a)$ repeatedly, we obtain
		\[
		\begin{aligned}
			\langle f(\partial)^{k+1}P_k,a\rangle
			&=\lambda_k(f^{k+1}a)\\
			&=\left(\prod_{j=0}^{k-1}(\rho+j)\right)\lambda_0(fa)
			=0,
		\end{aligned}
		\]
		because $\lambda_0(fa)=f(0)a(0)=0$; the product is understood as $1$
		when $k=0$. Hence $f(\partial)^{k+1}P_k=0$. All four conditions of
		Theorem \ref{pde_thm} are satisfied for the principal ideal $(f)$, and therefore
		$b_f(-\rho)=0$.
	\end{proof}

	\end{document}